\newsavebox\CBox
\newcommand\hcancel[2][0.5pt]{%
  \ifmmode\sbox\CBox{$#2$}\else\sbox\CBox{#2}\fi%
  \makebox[0pt][l]{\usebox\CBox}%
  \rule[0.5\ht\CBox-#1/2]{\wd\CBox}{#1}}
\numberwithin{equation}{section}
\theoremstyle{definition}  
\title[Generalised Spin$^r$ Structures on Homogeneous Spaces]{Generalised Spin$^r$ Structures \\ on Homogeneous Spaces} 
\author[Diego Artacho and Marie-Amélie Lawn]
{Diego Artacho and Marie-Amélie Lawn} 
\address{Department of Mathematics, Imperial College London, London, SW7 2AZ, United Kingdom.}
\email{ d.artacho21@imperial.ac.uk,m.lawn@imperial.ac.uk}
\def\@makechapterhead#1{%
  \vspace*{10\p@}%
  {\parindent \z@ \raggedright \sffamily
    \interlinepenalty\@M
    \Huge\bfseries \thechapter \space\space #1\par\nobreak
    \vskip 30\p@
  }}
\def\@makeschapterhead#1{%
  \vspace*{10\p@}%
  {\parindent \z@ \raggedright
    \sffamily
    \interlinepenalty\@M
    \Huge \bfseries  #1\par\nobreak
    \vskip 30\p@
  }}
\DeclareRobustCommand\mapstofill{%
  $\m@th
  {\mapstochar}%
  \smash-\mkern-7mu
  \cleaders\hbox{$\mkern-2mu\smash-\mkern-2mu$}\hfill
  \mkern-7mu
  \mathord\rightarrow
  $%
}
\def\subsection{\@startsection{subsection}{2}%
  \z@{.5\linespacing\@plus.7\linespacing}
{.5\baselineskip}%
  {\normalfont\flushleft\scshape}%
}
\patchcmd{\section}{\scshape}{\bfseries}{}{}
\patchcmd{\subsection}{\scshape}{\bfseries}{}{}
\patchcmd{\subsubsection}{\scshape}{\bfseries}{}{}
\renewcommand{\@secnumfont}{\bfseries}
\def\@tocline#1#2#3#4#5#6#7{\relax
  \ifnum #1>\c@tocdepth 
  \else
    \par \addpenalty\@secpenalty\addvspace{#2}%
    \begingroup \hyphenpenalty\@M
    \@ifempty{#4}{%
      \@tempdima\csname r@tocindent\number#1\endcsname\relax
    }{%
      \@tempdima#4\relax
    }%
    \parindent\z@ \leftskip#3\relax \advance\leftskip\@tempdima\relax
    \rightskip\@pnumwidth plus4em \parfillskip-\@pnumwidth
    #5\leavevmode\hskip-\@tempdima
      \ifcase #1
       \or\or \hskip 1em \or \hskip 2em \else \hskip 3em \fi%
      #6\nobreak\relax
    \hfill\hbox to\@pnumwidth{\@tocpagenum{#7}}\par
    \nobreak
    \endgroup
  \fi}
\newcommand{\restr}[1]{|_{#1}}
\newcommand{\mykeywords}[1]{\textbf{\emph{Key words: }} #1}
\newcommand{\msc}[1]{\textbf{\textbf{MSC: }} #1}
\DeclareMathOperator{\pr}{pr}
\DeclareMathOperator{\Ort}{O}
\DeclareMathOperator{\SO}{SO}
\DeclareMathOperator{\Spin}{Spin}
\DeclareMathOperator{\SU}{SU}
\DeclareMathOperator{\Sp}{Sp}
\DeclareMathOperator{\U}{U}
\theoremstyle{definition}
\newtheorem{deff}{Definition}[section]
\newtheorem{rem}[deff]{Remark}
\newtheorem{ex}[deff]{Example}
\theoremstyle{definition}
\newtheorem{prop}[deff]{Proposition}
\newtheorem{thmm}[deff]{Theorem}
\newtheorem*{thmm*}{Theorem}
\newtheorem{lemma}[deff]{Lemma}
\newtheorem{cor}[deff]{Corollary}
\newtheoremstyle{named}{}{}{\itshape}{}{\bfseries}{.}{.5em}{\thmnote{#3's }#1}
\theoremstyle{named}
\begin{document}

\keywords{}

\maketitle  

\begin{abstract} 
  Spinorial methods have proven to be a powerful tool to study geometric properties of spin manifolds. Our aim is to continue the spinorial study of manifolds that are not necessarily spin. We introduce and study the notion of $G$-invariance of spin$^r$ structures on a manifold $M$ equipped with an action of a Lie group $G$. For the case when $M$ is a homogeneous $G$-space, we prove a classification result of these invariant structures in terms of the isotropy representation. As an example, we study the invariant spin$^r$ structures for all the homogeneous realisations of the spheres. 
\end{abstract}

\mykeywords{Spin geometry, homogeneous spaces, holonomy, generalised spin structures, spin$^c$, spin$^h$. }

\msc{53C27; 57R15; 53C30. }
 
\tableofcontents

\section{Introduction}\label{section:intro} 
The field of spin geometry has proven to be a fruitful area of study with numerous applications in various areas of mathematics. It has been applied to a wide range of problems, from the study of topology and differential equations to the theory of quantum mechanics. However, by their nature, spinorial considerations are limited to manifolds that admit a spin structure. To overcome this limitation, considerable efforts have been made in recent decades to extend spin geometry to non-spin manifolds. Recent developments have led to the introduction of spin$^{\mathbb{C}}$ and spin$^{\mathbb{H}}$ structures, which come, respectively, from the complexification and quaternionification of the classical spin groups~\cite{friedrich,moroianu,bar_spinh,spinq}. 

The idea of extending spin geometry to non-spin manifolds was introduced by Friedrich and Trautmann~\cite{FT}, who started the theory of spinorial Lipschitz structures. These ideas have been developed further in~\cite{CS1,CS2,CS3}. 

While every spin manifold is spin$^{\mathbb{C}}$ and every spin$^{\mathbb{C}}$ manifold is in turn spin$^{\mathbb{H}}$, the converses are not true. For example, even-dimensional complex projective spaces $\mathbb{CP}^{2n}$ are not spin, but they are spin$^{\mathbb{C}}$. Similarly, the Wu manifold $W = \SU(3)/\SO(3)$ is not spin$^{\mathbb{C}}$, yet it is spin$^{\mathbb{H}}$~\cite{thesis}. Moreover, there exist examples of manifolds which are not even spin$^{\mathbb{H}}$, such as $W \times W$~\cite{AM21}.  

It is a well-known fact that every almost-complex Riemannian manifold is spin$^{\mathbb{C}}$, and every almost-quaternionic manifold is spin$^{\mathbb{H}}$. Furthermore, it is known that every oriented Riemannian manifold of dimension $\leq 3$ is spin. In 2005, Teichner and Vogt~\cite{Teichner} completed the work started by Hirzebruch and Hopf in 1958~\cite{hopf}, showing that every oriented Riemannian manifold of dimension $\leq 4$ admits a spin$^{\mathbb{C}}$ structure. These structures are crucial for Seiberg-Witten theory, which has become an essential tool in the study of smooth $4$-manifolds~\cite{Morgan}. In 2021, Albanese and Milivojević~\cite{AM21,AM21_2} showed that every oriented Riemannian manifold of dimension $\leq 5$ is spin$^{\mathbb{H}}$, and for closed manifolds, this is true for dimensions $\leq 7$. In~\cite{quaternionic_monopoles}, in analogy to Seiberg-Witten theory, the authors construct quaternionic monopoles, which are associated to spin$^{\mathbb{H}}$ structures on $4$-manifolds.  

Espinosa and Herrera~\cite{H1} introduced the concept of spin$^r$ structure for each $r \in \mathbb{N}$, the cases $r=1,2,3$ corresponding to spin, spin$^{\mathbb{C}}$ and spin$^{\mathbb{H}}$ respectively. They call these structures \emph{spinorially twisted spin structures}~\cite{H1,H2,H3}. Albanese and Milivojevi\'c~\cite{AM21} call these structures \emph{generalised spin$^k$ structures}. An oriented Riemannian manifold $M$ is spin$^r$ if and only if it can be embedded into a spin manifold with codimension $r$. Consequently, if $r<s$, every spin$^r$ manifold is spin$^s$. They proved that there is no $r \in \mathbb{N}$ such that every oriented Riemannian manifold is spin$^r$. This result can be interpreted as ensuring that this chain of structures does not \emph{stabilise}. As pointed out by Lawson in~\cite{lawson}, this is remarkable, as every manifold embeds into an orientable manifold with codimension $1$.

For a general spin$^r$ manifold, there is not a preferred spin$^r$ structure to work with. In~\cite{H1}, the authors give a canonical construction of a spin$^n$ structure for every oriented Riemannian $n$-manifold in terms of its holonomy. We show that this structure is not induced by a spin$^s$ structure for any $s < n$ for a wide class of manifolds. 

For $r > 3$, there are presently no known topological obstructions to the existence of a spin$^r$ structure, and the problem of their classification is beyond our reach. In this paper, we give a complete study of invariant spin$^r$ structures on homogeneous spaces. 

Homogeneous spaces are of great interest in this context, because there is a simple way of looking at spin structures provided they are invariant, namely as lifts of the isotropy representation to the spin group~\cite{Bar, AC, DKL}. However, not all homogeneous spaces are spin, and indeed, all the examples provided above are homogeneous. It is therefore natural to ask whether this isotropy-lifting criterion can be extended to invariant spin$^r$ structures. It turns out that this is the case, as we show in Section~\ref{section:invariance}, along with a characterisation theorem (Theorem~\ref{cor:inv_gen_spin}), which extends some of the results in~\cite{CGT}. 

Another feature of homogeneous spaces is that they provide an ideal setting for explicit spinorial computations -- see~\cite{AH24}. For instance, in~\cite{AHL}, the authors explore the relationship between certain invariant spinors on homogeneous spheres and various $G$-structures. 

For each $n \in \mathbb{N}$, we provide simple examples of $n$-dimensional oriented Riemannian homogeneous $G$-spaces $M$ for which the minimal $r\in\mathbb{N}$ such that $M$ admits a $G$-invariant spin$^r$ structure is exactly $n$ (Theorem~\ref{thm:spheres_so}). This establishes a $G$-invariant analogue of the fact that, for each $r\in\mathbb{N}$, there exists an oriented Riemannian manifold that does not admit a spin$^r$ structure~\cite{AM21}. 

Additionally, in Section~\ref{sec:spheres} we study a family of examples. The compact and connected Lie groups that act transitively and effectively on spheres were classified by Montgomery and Samelson in~\cite{class_spheres}. For each of these homogeneous realisations of the spheres, we compute the minimal $r\in \mathbb{N}$ such that the corresponding sphere admits an invariant spin$^r$ structure, refining the picture presented in~\cite{DKL}, where only spin structures were considered. Finally, we show the beautiful relationship between the existence of invariant spin$^r$ structures on homogeneous spheres and lifts of the holonomy representation of an oriented Riemannian $n$-manifold to $\Spin^r(n)$, via Berger's classification of holonomy groups. 

\section{Generalised \texorpdfstring{spin$^r$}{spinʳ} structures}

\subsection{\texorpdfstring{Spin$^r$}{Spinʳ} groups}
Denote by $\SO(n)$ the special orthogonal group, and let $\lambda_n \colon \Spin(n) \to \SO(n)$ be the standard two-sheeted covering. For $r \in \mathbb{N}$, we define the group 
\[
\Spin^r(n) \coloneqq \left(\Spin(n) \times \Spin(r) \right) / \mathbb{Z}_2 \, ,     
\]
where $\mathbb{Z}_2 = \langle(-1,-1)\rangle \subseteq \Spin(n) \times \Spin(r)$. Note that $\Spin^1(n) = \Spin(n)$. Moreover, as $\Spin(2) \cong \U(1)$ and $\Spin(3) \cong \Sp(1)$, it is clear that  $\Spin^2(n) = \Spin^{\mathbb{C}}(n)$ and $\Spin^3(n) = \Spin^{\mathbb{H}}(n)$. There are natural Lie group homomorphisms 
\[
\begin{tabularx}{\linewidth}{@{}XX@{}}
    \flushright
    $\begin{aligned}
        \lambda^{r}_n \colon &\Spin^{r}(n) \to \SO(n) \\
        &[\mu,\nu] \mapsto \lambda_n(\mu)
    \end{aligned}$ \, ,
    &
    \flushleft
    $\begin{aligned}
        \xi^{r}_n \colon &\Spin^{r}(n) \to \SO(r) \\
        &[\mu,\nu] \mapsto \lambda_r(\nu)
    \end{aligned}$ \, . 
\end{tabularx}
\]
For $r<s$, there is a natural inclusion $\iota^{rs}_n \colon \Spin^r(n) \hookrightarrow \Spin^s(n)$, which makes the following diagram commute: 
\[
\begin{tikzcd}[row sep=1cm, column sep=1cm]
    &  &\Spin^{r} (n) \arrow{dr}{\lambda^{r}_n} \arrow[hookrightarrow]{dd}[pos=.2]{\iota^{rs}_n}  & \\
     &\Spin(n) \arrow{rr}[pos=.2]{\lambda_n} \arrow[hookrightarrow]{ur}{\iota^{1r}_n} \arrow[hookrightarrow]{dr}{\iota^{1s}_n} & &\SO(n) \\
     & &\Spin^{s} (n) \arrow{ur}{\lambda^{s}_n} & 
     \end{tikzcd} \, . 
 \]

 We need to first make some topological considerations, which we shall use later. Define the map
 \begin{align*}
     \varphi^{r,n} \colon &\Spin^r(n) \to \SO(n) \times \SO(r) \, . \\
     &[\mu,\nu] \mapsto (\lambda_n(\mu),\lambda_r(\nu))
     \end{align*}
 The following result shows that this map is a two-sheeted covering, and describes it at the level of fundamental groups. We will use this result to apply the lifting criterion. 
 \begin{prop}\label{prop:fg}
 For $n,r \in \mathbb{N}$, let $\varphi^{r,n}_{\sharp}$ be the map induced by $\varphi^{r,n}$ at the level of fundamental groups. Then, 
 \begin{enumerate}
 \item $\varphi^{r,n}$ is a two-sheeted covering map; 
 \item $\varphi^{2,2}_{\sharp}\left(\pi_1\left( \Spin^2(n)\right) \right)  = \langle(1,\pm 1)\rangle \subseteq \mathbb{Z} \times \mathbb{Z} \cong \pi_1 \left( \SO(2) \times \SO(2)\right)$; 
 \item if $n \geq 3$, then $\varphi^{2,n}_{\sharp} \left( \pi_1 \left(\Spin^2(n)\right)\right) = \langle(1,1)\rangle \subseteq \mathbb{Z}_2 \times \mathbb{Z} \cong \pi_1 \left( \SO(n) \times \SO(2)\right)$;
 \item if $r,n \geq 3$, then $\varphi^{r,n}_{\sharp}\left(\pi_1 \left(\Spin^r(n) \right) \right) = \langle (1,1) \rangle \subseteq \mathbb{Z}_2 \times \mathbb{Z}_2 \cong \pi_1 \left( \SO(n) \times \SO(r)\right)$. 
 \end{enumerate}
 \end{prop}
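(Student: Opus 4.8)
The plan is to recognise $\varphi^{r,n}$ as a quotient of the product covering $\lambda_n\times\lambda_r$ and then apply elementary covering-space theory. Write $q\colon\Spin(n)\times\Spin(r)\to\Spin^r(n)$ for the canonical projection, so that $\varphi^{r,n}\circ q=\lambda_n\times\lambda_r$ by construction; $q$ is the quotient by the free action of the central subgroup $\langle(-1,-1)\rangle\cong\mathbb{Z}_2$, hence a $2$-sheeted covering, and it is connected whenever $n,r\geq2$. For (1), $\varphi^{r,n}$ is a Lie group homomorphism, surjective because $\lambda_n$ and $\lambda_r$ are, with kernel $q(\ker\lambda_n\times\ker\lambda_r)=q(\{\pm1\}\times\{\pm1\})$; modulo $\langle(-1,-1)\rangle$ this has exactly the two elements $[1,1]$ and $[-1,1]=[1,-1]$, so $\varphi^{r,n}$ is a surjective morphism of Lie groups with kernel of order $2$, hence a $2$-sheeted covering. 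The degenerate cases $n=1$ or $r=1$ are included, since $\ker\lambda_1=\Spin(1)=\{\pm1\}$ still holds there.

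For (2)--(4) I would use that a connected $k$-sheeted covering induces an injection on $\pi_1$ with image of index $k$; thus $\varphi^{r,n}_{\sharp}$ is injective with index-$2$ image. From $q$ one gets a short exact sequence
\[
1\longrightarrow\pi_1\!\left(\Spin(n)\times\Spin(r)\right)\xrightarrow{\ q_{\sharp}\ }\pi_1\!\left(\Spin^r(n)\right)\longrightarrow\mathbb{Z}_2\longrightarrow1 \, ,
\]
whose nontrivial coset is represented by $\gamma\coloneqq[q\circ(c_n,c_r)]$, where $c_n,c_r$ are fixed paths from $1$ to $-1$ in $\Spin(n),\Spin(r)$ (say $c_2(t)=e^{\pi i t}$ under $\Spin(2)\cong\U(1)$). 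Since $\varphi^{r,n}_{\sharp}$ is injective, $\varphi^{r,n}_{\sharp}(\pi_1(\Spin^r(n)))$ is generated by $\varphi^{r,n}_{\sharp}(q_{\sharp}(\pi_1(\Spin(n)\times\Spin(r))))$ together with $\varphi^{r,n}_{\sharp}(\gamma)$. The former is read off from $\varphi^{r,n}_{\sharp}\circ q_{\sharp}=(\lambda_n)_{\sharp}\times(\lambda_r)_{\sharp}$, using $\pi_1(\Spin(k))=0$ for $k\geq3$ and that $(\lambda_2)_{\sharp}\colon\mathbb{Z}\to\mathbb{Z}$ is multiplication by $2$ (as $\lambda_2$ is the squaring map $\U(1)\to\U(1)$). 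The latter equals $([\lambda_n\circ c_n],[\lambda_r\circ c_r])$, and the standard fact that a path in $\Spin(k)$ from $1$ to $-1$ projects to a generator of $\pi_1(\SO(k))$ (the nonzero class when $\pi_1(\SO(k))=\mathbb{Z}_2$, and $\pm1$ when $k=2$) gives $\varphi^{r,n}_{\sharp}(\gamma)=(1,1)$ in the relevant $\pi_1$. Assembling: in (4) the source is simply connected, so the image is $\langle(1,1)\rangle\subseteq\mathbb{Z}_2\times\mathbb{Z}_2$; in (3) it is $\langle(0,2),(1,1)\rangle=\langle(1,1)\rangle\subseteq\mathbb{Z}_2\times\mathbb{Z}$; and in (2), where $n=r=2$, it is $\langle(2,0),(0,2),(1,1)\rangle=\langle(1,1),(1,-1)\rangle\subseteq\mathbb{Z}\times\mathbb{Z}$. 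Each of these is visibly of index $2$, as it must be.

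The diagram chase is routine; the genuinely delicate points are (i) verifying that $\gamma$ generates the cokernel of $q_{\sharp}$ --- i.e. exhibiting a loop in $\Spin^r(n)$ whose $q$-lift joins the two points of a fibre --- and (ii) evaluating $\varphi^{r,n}_{\sharp}(\gamma)$, which hinges on the ``$1$-to-$(-1)$ path projects to a generator of $\pi_1(\SO(k))$'' fact and, when $r=2$, on keeping the $\mathbb{Z}_2$-summand coming from $\SO(n)$ and the $\mathbb{Z}$-summand coming from $\SO(2)$ distinct. One should also observe that the answer is independent of the chosen paths $c_n,c_r$: changing them alters $\varphi^{r,n}_{\sharp}(\gamma)$ only by an element of $\varphi^{r,n}_{\sharp}(q_{\sharp}(\pi_1(\Spin(n)\times\Spin(r))))$, which is already in the subgroup.
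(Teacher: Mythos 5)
Your proof is correct and takes essentially the same approach as the paper's: both factor $\varphi^{r,n}$ through the canonical projection $q=p\colon\Spin(n)\times\Spin(r)\to\Spin^r(n)$ and distinguish loops in $\Spin^r(n)$ whose $q$-lifts close up from those whose lifts connect $(1,1)$ to $(-1,-1)$. The only difference is one of packaging: you encode this dichotomy in the short exact sequence $1\to\pi_1(\Spin(n)\times\Spin(r))\to\pi_1(\Spin^r(n))\to\mathbb{Z}_2\to1$ and exhibit the explicit coset representative $\gamma$, while the paper carries out the same case analysis by hand; and for (1) you count $\ker\varphi^{r,n}$ rather than composing sheet numbers, which also handles the degenerate cases $n=1$ or $r=1$ cleanly.
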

 \begin{proof}
Consider the composition 
 \[
 \Spin(n) \times \Spin(r) \xrightarrow[]{p} \Spin^r(n) \xrightarrow[]{\varphi^{r,n}} \SO(n) \times \SO(r) \, ,     
 \] 
 where $p$ is the obvious projection. 
 
 Point (1) follows from the fact that $\lambda_n,\lambda_r$ and $p$ are two-sheeted covering maps. 
 
 For (2), suppose $r=n=2$. Recall that $\Spin(2) \cong \SO(2) \cong S^1$, and that the map $\varphi^{2,2} \circ p$ is given by $(z,w) \mapsto (z^2,w^2)$. Consider the loop $\alpha_{\pm} \colon [0,1] \to \Spin^2(2)$ defined by $\alpha_{\pm}(t) = p\left( e^{i\pi t} , e^{ \pm i\pi t} \right)$. It is clear that the image of $\alpha_{\pm}$ in the fundamental group of $\SO(2) \times \SO(2)$, which is isomorphic to $\mathbb{Z} \times \mathbb{Z}$, is $(1,\pm 1)$. Moreover, the lift of every loop $\beta$ based at $[1,1]$ in $\Spin^2(2)$ along $p$ starting at $(1,1)$ is of the form $(\beta_1,\beta_2)$, where either both $\beta_i$ are loops based at $1$ or both are paths from $1$ to $-1$ in the corresponding factors of $\Spin(2) \times \Spin(2)$. In the first (resp. second) case, the image of $\beta$ in the fundamental group of $\SO(2) \times \SO(2)$, which is isomorphic to $\mathbb{Z} \times \mathbb{Z}$, is of the form $(2r , 2s)$ (resp. $(2r+1,2s+1)$), for some $r,s \in \mathbb{Z}$. In both cases, the image of $\beta$ is an element of $\langle (1 , \pm 1 ) \rangle$. This proves (2). 
 
 Let us prove (3). Suppose $n \geq 3$. Let $\beta$ be a loop in $\Spin^2(n)$ based at $[1,1]$. Its lift along $p$ starting at $(1,1)$ is a path with endpoint either $(1,1)$ or $(-1,-1)$. Hence, this lift is of the form $(\beta_1,\beta_2)$, where either both $\beta_i$ are loops based at $1$ or both are paths from $1$ to $-1$ in the corresponding factors of $\Spin(n) \times \Spin(2)$. In the first case, the image of $\beta$ in the fundamental group of $\SO(n) \times \SO(2)$, which is isomorphic to $\mathbb{Z}_2 \times \mathbb{Z}$, is of the form $(0,2m)$, for some $m \in \mathbb{Z}$. In the other case, the image of $\beta$ is of the form $(1,2l+1)$, for some $l \in \mathbb{Z}$. This proves (3). 
 
 The proof of (4) is analogous.

 \end{proof}

 \subsection{\texorpdfstring{Spin$^r$}{Spinʳ} structures on manifolds}

As in the classical spin, spin$^{\mathbb{C}}$ and spin$^{\mathbb{H}}$ cases, one can define a family of structures on oriented Riemannian manifolds: 
\begin{deff}
    Let $M$ be an oriented Riemannian $n$-manifold with principal $\SO(n)$-bundle of positively oriented orthonormal frames $FM$. A spin$^r$ structure on $M$ is a lift of the structure group of $FM$ along the Lie group homomorphism $\lambda^r_n$. In other words, it is a pair $(P,\Phi)$ consisting of 
    \begin{itemize}
        \item a principal $\Spin^r(n)$-bundle $P$ over $M$, and
        \item a $\Spin^r(n)$-equivariant bundle homomorphism $\Phi \colon P \to FM$, where $\Spin^r(n)$ acts on $FM$ via $\lambda^r_n$. 
    \end{itemize}
The associated principal $\SO(r)$-bundle to $P$ along $\xi^r_n$ is called the auxiliary bundle of the spin$^r$ structure, and is denoted by $\widehat{P}$. The natural map $\theta \colon P \to FM \widetilde{\times} \widehat{P}$, where $\widetilde{\times}$ denotes fibre product, is a two-sheeted covering. 

If $(P_1,\Phi_1)$ and $(P_2,\Phi_2)$ are spin$^r$ structures on $M$, an equivalence of spin$^r$ structures from $(P_1,\Phi_1)$ to $(P_2,\Phi_2)$ is a $\Spin^r(n)$-equivariant diffeomorphism $f \colon P_1 \to P_2$ such that $\Phi_1 = \Phi_2 \circ f$. 
\end{deff}

\begin{rem}
For $r<s$, a spin$^r$ structure naturally induces a spin$^s$ structure, namely via the associated bundle construction along $\iota^{rs}$. 
\end{rem}

Given an oriented Riemannian manifold $M$, it is therefore natural to ask which is the minimal $r \in \mathbb{N}$ such that $M$ has a spin$^r$ structure. 

\begin{deff}\label{deff:spin_type}
Let $M$ be an oriented Riemannian manifold. The spin type of $M$, denoted by $\Sigma(M)$, is the smallest $r \in \mathbb{N}$ such that $M$ admits a spin$^r$ structure. 
\end{deff}
The spin type measures the failure of a manifold to be spin. 
Being spin$^r$ turns out to be a topological property, as shown in the next result. 

\begin{prop}(\cite[Prop.~3.2]{AM21})\label{prop:characterisation_gen_spin}
For an orientable Riemannian manifold $M$, the following are equivalent:  
\begin{enumerate}
\item $M$ is spin$^r$, 
\item there is an orientable rank-$r$ real vector bundle $E \to M$ such that $TM \oplus E$ is spin, 
\item $M$ immerses in a spin manifold with codimension $r$, 
\item $M$ embeds in a spin manifold with codimension $r$. \qed 
\end{enumerate} 
\end{prop}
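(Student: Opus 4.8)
The plan is to prove the cycle of implications $(1) \Rightarrow (2) \Rightarrow (4) \Rightarrow (3) \Rightarrow (2) \Rightarrow (1)$; the real content is the equivalence of the spin$^r$ condition with the stabilisation condition (2), after which the embedding and immersion statements come essentially for free.

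For $(1) \Rightarrow (2)$, given a spin$^r$ structure $(P,\Phi)$ I would set $E \coloneqq \hat{P} \times_{\SO(r)} \R^r$, the rank-$r$ vector bundle associated to the auxiliary bundle; it is orientable since $\SO(r)$ is connected. The homomorphism $\Spin(n) \times \Spin(r) \to \Spin(n+r)$ kills $(-1,-1)$, hence descends to a homomorphism $\Spin^r(n) \to \Spin(n+r)$ covering the block inclusion $\SO(n) \times \SO(r) \hookrightarrow \SO(n+r)$. Since the oriented orthonormal frame bundle of $TM \oplus E$ is canonically $\bigl(FM \tilde{\times} \hat{P}\bigr) \times_{\SO(n) \times \SO(r)} \SO(n+r)$, extending $P$ along $\Spin^r(n) \to \Spin(n+r)$ produces a principal $\Spin(n+r)$-bundle double-covering it, i.e.\ a spin structure on $TM \oplus E$. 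Conversely, for $(2) \Rightarrow (1)$, the metrics and orientations reduce the frame bundle of $TM \oplus E$ to the $\SO(n) \times \SO(r)$-bundle $FM \tilde{\times} FE$, and restricting a spin structure on $TM \oplus E$ over this reduction gives a double cover of $FM \tilde{\times} FE$ whose structure group is the preimage $\widetilde{G}$ of $\SO(n) \times \SO(r)$ inside $\Spin(n+r)$. The crux is to identify $\widetilde{G}$ with $\Spin^r(n)$: both are the double cover of $\SO(n) \times \SO(r)$ classified by the diagonal homomorphism $\pi_1(\SO(n) \times \SO(r)) \to \mathbb{Z}_2$ (for $n,r \geq 3$), and in the degenerate low-dimensional cases this is precisely the information recorded by the several parts of Proposition \ref{prop:fg}. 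Once this is done, the restricted cover together with its projection to $FM$ is a spin$^r$ structure on $M$. I expect this identification---handling all the small values of $n$ and $r$ uniformly---to be the main obstacle.

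The remaining implications are geometric. For $(2) \Rightarrow (4)$, take $N$ to be the total space of $E$: then $M$ is embedded in $N$ as the zero section with codimension $r$, one has $TN\restr{M} \cong TM \oplus E$, and since $N$ deformation retracts onto $M$ it follows that $w_1(TN) = w_2(TN) = 0$, so $N$ admits a spin structure. The implication $(4) \Rightarrow (3)$ is trivial, as an embedding is an immersion. Finally, for $(3) \Rightarrow (2)$, if $f \colon M \to N$ is a codimension-$r$ immersion into a spin manifold, fix a metric on $N$ and let $E$ be the normal bundle of $f$; then $f^*TN \cong TM \oplus E$ inherits a spin structure from $N$, and $w_1(E) = w_1(f^*TN) + w_1(TM) = 0$ forces $E$ to be orientable, which is condition (2).
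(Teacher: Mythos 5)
The paper itself offers no proof of this proposition---it is quoted verbatim from Albanese--Milivojevi\'c with the \(\square\) built into the statement---so there is no internal argument to compare against; I will simply assess your proof on its own terms. Your cycle of implications is sound, and the geometric steps \((2)\Rightarrow(4)\Rightarrow(3)\Rightarrow(2)\) are all correct (total space of \(E\) with the zero-section embedding, homotopy invariance of \(w_1,w_2\), and the normal-bundle argument, respectively). The one place you flag as ``the main obstacle''---identifying \(\widetilde G \coloneqq \lambda_{n+r}^{-1}\bigl(\SO(n)\times\SO(r)\bigr)\subseteq\Spin(n+r)\) with \(\Spin^r(n)\) in the direction \((2)\Rightarrow(1)\)---is in fact not an obstacle, and the route you sketch (classifying double covers by maps \(\pi_1\to\Z_2\) and then patching the low-dimensional cases by hand) is more laborious than necessary. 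You already introduced the relevant map for \((1)\Rightarrow(2)\): the multiplication \(m\colon\Spin(n)\times\Spin(r)\to\Spin(n+r)\), \((\mu,\nu)\mapsto\mu\nu\), is a homomorphism because the two subgroups commute inside \(\Cl(n+r)\) (each element is a product of an \emph{even} number of generators from a fixed block), it covers the block inclusion \(\SO(n)\times\SO(r)\hookrightarrow\SO(n+r)\), its kernel is exactly \(\{(1,1),(-1,-1)\}\) since \(\Cl(n)\cap\Cl(r)=\R\) inside \(\Cl(n+r)\), and its image contains \(-1\) and surjects onto \(\SO(n)\times\SO(r)\), hence is all of \(\widetilde G\). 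So \(m\) descends to an isomorphism \(\Spin^r(n)\xrightarrow{\ \cong\ }\widetilde G\), uniformly in \(n\) and \(r\), with no case analysis at all. Note also that invoking Proposition~\ref{prop:fg} as you propose only records the covering data of \(\varphi^{r,n}\colon\Spin^r(n)\to\SO(n)\times\SO(r)\); you would still owe the parallel computation for \(\widetilde G\to\SO(n)\times\SO(r)\), which is precisely the bookkeeping the direct isomorphism sidesteps. With that substitution your proof is complete and correct.
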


This lets us prove that Definition~\ref{deff:spin_type} is a good definition, in the sense that the spin type is defined for all oriented Riemannian manifolds: 

\begin{cor}\label{cor:spin_type_bounded}
An oriented Riemannian $n$-manifold $M$ has spin type $1\leq\Sigma(M)\leq n$.  
\end{cor}
\begin{proof}
    By Whitney's embedding theorem, such a manifold can be smoothly embedded into $\mathbb{R}^{2n}$, which is spin. Now apply Proposition~\ref{prop:characterisation_gen_spin}. Equivalently, one can take $E = TM$ in point (2) of Proposition~\ref{prop:characterisation_gen_spin}, and note that $w_1(TM \oplus TM)=0$ and $w_2(TM \oplus TM)=0$, as $TM$ is oriented.    
\end{proof}

\begin{rem}\label{remark:cohen}
    Note that, if $n>1$ and $M$ is a closed oriented Riemannian $n$-manifold, one can improve the bound in Corollary~\ref{cor:spin_type_bounded} to $1 \leq \Sigma(M) \leq n-\alpha(n)$, where $\alpha(n)$ is the number of ones in the binary expression of $n$, by Cohen's theorem~\cite{cohen}. 
\end{rem}

There is a way of constructing an explicit spin$^n$ structure on any connected and oriented Riemannian $n$-manifold $M$. Fix $x \in M$, and let $G$ be the holonomy group of $M$ at $x$. Let $h \colon G \to \SO(n)$ be the holonomy representation at $x$. The positively oriented orthonormal frame bundle of $M$ admits a lift of the structure group from $\SO(n)$ to $G$ via $h$. Hence, if $h$ lifts to $\Spin^r(n)$, we obtain a spin$^r$ structure on $M$ via the associated bundle construction. This is always the case for $r=n$: 

\begin{prop}(see~\cite[Prop.~3.3]{H1})\label{prop:hol}
    Let $M$ be a connected oriented Riemannian $n$-manifold, let $x\in M$, and let $h \colon G \to \SO(n)$ be the holonomy representation at $x$. Then, $h$ lifts to $\Spin^n(n)$, i.e., there exists a Lie group homomorphism $\widetilde{h} \colon G \to \Spin^n (n)$ such that the diagram
    \[ 
    \begin{tikzcd}[row sep=1cm, column sep=1cm]
        &  & \Spin^n (n) \arrow{d}{\lambda_n^n}\\
        & G \arrow{r}{h} \arrow{ur}{\widetilde{h}} &\SO(n)
        \end{tikzcd}
    \]
    commutes.  
\end{prop}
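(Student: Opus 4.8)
The plan is to exhibit the lift $\tilde h$ explicitly by using the fact that the holonomy group $G\subseteq\SO(n)$ acts on $\R^n$, hence on the trivial bundle $\R^n$ over a point, and to build from the \emph{defining representation itself} the auxiliary $\Spin(n)$-factor needed to kill the obstruction. Concretely, recall that $\Spin^n(n)=\bigl(\Spin(n)\times\Spin(n)\bigr)/\Z_2$, and consider the map $\Delta\colon\SO(n)\to\SO(n)\times\SO(n)$, $g\mapsto(g,g)$. By Proposition~\ref{prop:fg}(1), $\varphi^{n,n}\colon\Spin^n(n)\to\SO(n)\times\SO(n)$ is a two-sheeted covering, and the composite $\Delta\circ h\colon G\to\SO(n)\times\SO(n)$ has image in the diagonal. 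The key point is that the diagonal embedding $\SO(n)\hookrightarrow\SO(n)\times\SO(n)$ \emph{lifts} to $\Spin^n(n)$: indeed, the pullback of $\varphi^{n,n}$ along $\Delta$ is the double cover of $\SO(n)$ corresponding to the subgroup $\Delta_\sharp^{-1}\bigl(\operatorname{im}\varphi^{n,n}_\sharp\bigr)\subseteq\pi_1(\SO(n))$, and one checks using Proposition~\ref{prop:fg}(1) (the covering is $\bigl(\Spin(n)\times\Spin(n)\bigr)/\Z_2\to\SO(n)\times\SO(n)$) that on the diagonal this pullback is the \emph{trivial} double cover $\SO(n)\sqcup\SO(n)$ — because the loop $t\mapsto(\gamma(t),\gamma(t))$ with $\gamma$ a generator of $\pi_1(\SO(n))$ lifts along $p$ to the loop $t\mapsto(\tilde\gamma(t),\tilde\gamma(t))$ in $\Spin(n)\times\Spin(n)$ ending at $(-1,-1)$, which \emph{is} a loop in $\Spin^n(n)$. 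Thus $\Delta$ itself admits a lift $s\colon\SO(n)\to\Spin^n(n)$ with $\varphi^{n,n}\circ s=\Delta$; composing with the first projection $\lambda^n_n$ gives $\lambda^n_n\circ s=\operatorname{id}_{\SO(n)}$.

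Granting this, the proof is immediate: set $\tilde h\coloneqq s\circ h\colon G\to\Spin^n(n)$. Then $\lambda^n_n\circ\tilde h=\lambda^n_n\circ s\circ h=h$, so the diagram commutes, and $\tilde h$ is a Lie group homomorphism as a composite of such.

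The only genuine content is therefore the claim that the diagonal $\SO(n)\hookrightarrow\SO(n)\times\SO(n)$ lifts through $\varphi^{n,n}$, equivalently that $\Delta_\sharp\bigl(\pi_1(\SO(n))\bigr)\subseteq\varphi^{n,n}_\sharp\bigl(\pi_1(\Spin^n(n))\bigr)$. For $n\ge 3$ this is the statement that $(1,1)\in\langle(1,1)\rangle\subseteq\Z_2\times\Z_2$, which is Proposition~\ref{prop:fg}(4); for $n=2$ it is $(1,1)\in\langle(1,\pm 1)\rangle\subseteq\Z\times\Z$, which is Proposition~\ref{prop:fg}(2); and $n=1$ is trivial since $\SO(1)$ is a point. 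So the main obstacle — verifying that the relevant $\pi_1$-condition of the lifting criterion holds — has in fact already been dispatched by Proposition~\ref{prop:fg}, and the remaining work is only to assemble the diagonal-lift map $s$ and to observe that a lift of a homomorphism of Lie groups through a covering homomorphism is again a homomorphism (which follows from uniqueness of lifts applied to the two maps $(\sigma,\tau)\mapsto s(\sigma\tau)$ and $(\sigma,\tau)\mapsto s(\sigma)s(\tau)$ on $\SO(n)\times\SO(n)$, both lifting $(\sigma,\tau)\mapsto(\sigma\tau,\sigma\tau)$ and agreeing at the identity). An alternative, slightly more hands-on route avoids the lifting criterion entirely: define $\tilde h$ on a simply connected cover or directly note that for $G$ the holonomy group one may use the universal cover $\widetilde G$, lift $h\circ\pi$ to $\Spin(n)\subseteq\Spin^n(n)$, and then twist by the second factor to descend to $G$; but the diagonal-lift argument above is cleaner and makes transparent why codimension exactly $r=n$ suffices. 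I expect writing the clean covering-space argument for the diagonal lift to be the part requiring the most care.
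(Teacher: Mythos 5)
Your proof is correct and rests on the same core ingredient as the paper's: Proposition~\ref{prop:fg} applied to the diagonal of $\pi_1\bigl(\SO(n)\times\SO(n)\bigr)$. The paper's version is a one-liner — since $\lambda_n^n=\pr_1\circ\varphi^{n,n}$ and $(h\times h)_\sharp$ lands in the diagonal of $\pi_1(\SO(n))\times\pi_1(\SO(n))$, which equals $\varphi^{n,n}_\sharp\bigl(\pi_1(\Spin^n(n))\bigr)$ by Proposition~\ref{prop:fg}, the map $h\times h$ lifts — whereas you make this explicit by first producing the section $s\colon\SO(n)\to\Spin^n(n)$ of $\lambda_n^n$ lifting $\Delta$ and then setting $\tilde h=s\circ h$, which has the minor extra virtue that the covering-space lifting criterion is invoked only for the connected group $\SO(n)$ rather than for the holonomy group $G$ itself.
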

\begin{proof}
 Recall that $\lambda_{n}^{n} = \pr_1 \circ \varphi^{n,n}$. The map $h \times h \colon G \to \SO(n) \times \SO(n)$ always lifts to $\Spin^n(n)$ along $\varphi^{n,n}$, by Proposition~\ref{prop:fg}. 
\end{proof}

This construction gives us a naturally preferred spin$^n$ structure on any connected oriented Riemannian $n$-manifold. Note that its auxiliary vector bundle is the tangent bundle $TM$ itself (compare with the proof of Corollary~\ref{cor:spin_type_bounded}). It is natural to ask whether this spin$^n$ structure on $M^n$ is \emph{proper}, in the sense that it is not induced by a spin$^r$ structure on $M$, for some $r<n$. To answer this question, we first generalise a result by Moroianu~\cite[Lem.~2.1]{moroianu}, who focused on the case $r=2$, $s=1$:  
\begin{lemma}\label{lemma:moroianu_gen}
    Let $M$ be an oriented Riemannian $n$-manifold admitting a spin$^r$ structure for some $r\geq 1$, and let $1 \leq s \leq r$. Then, the spin$^r$ structure is induced by a spin$^s$ structure along $\iota^{sr}_n$ if and only if the auxiliary $\SO(r)$-bundle of the spin$^r$ structure admits a reduction of the structure group to $\SO(s)$. 
\end{lemma}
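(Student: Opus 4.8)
The plan is to translate both conditions into statements about the total space of the two-sheeted covering $\theta\colon P \to FM \tilde\times \hat P$ and use that $P$ together with $\Phi$ is essentially determined by this covering. Recall that a spin$^r$ structure $(P,\Phi)$ on $M$ with auxiliary bundle $\hat P$ gives the two-sheeted covering $\theta$, and conversely $\Spin^r(n) = (\Spin(n)\times\Spin(r))/\Z_2$ is the fibre product $FM\tilde\times\hat P$ divided appropriately; the key structural fact I will use is that, because $\varphi^{r,n}$ is a $2$-sheeted covering (Proposition \ref{prop:fg}(1)), the principal $\Spin^r(n)$-bundle $P$ is recovered from the principal $(\SO(n)\times\SO(r))$-bundle $FM\tilde\times\hat P$ by pulling back the covering $\varphi^{r,n}$, i.e.\ $P$ is the associated bundle $(FM\tilde\times\hat P)\times_{\SO(n)\times\SO(r)}\Spin^r(n)$.

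\emph{($\Leftarrow$)} Suppose $\hat P$ reduces to an $\SO(s)$-bundle $\hat Q$. Then $FM\tilde\times\hat P$ contains the sub-$(\SO(n)\times\SO(s))$-bundle $FM\tilde\times\hat Q$, and I can form $Q\coloneqq (FM\tilde\times\hat Q)\times_{\SO(n)\times\SO(s)}\Spin^s(n)$, using that $\varphi^{s,n}$ is again a $2$-sheeted covering and that the square relating $\varphi^{s,n}$, $\varphi^{r,n}$ and the inclusions $\iota^{sr}_n$, $\SO(s)\hookrightarrow\SO(r)$ commutes. One then checks that $Q$ is a principal $\Spin^s(n)$-bundle, that the natural bundle map $Q\to FM$ induced by $\pr_1$ is $\Spin^s(n)$-equivariant, so $(Q,\cdot)$ is a spin$^s$ structure, and that the associated bundle $Q\times_{\Spin^s(n)}\Spin^r(n)$ is equivariantly isomorphic to $P$ over $FM$ — i.e.\ the spin$^r$ structure is induced by $(Q,\cdot)$ along $\iota^{sr}_n$. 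This last isomorphism follows from the transitivity of associated-bundle constructions applied to the composite $\SO(n)\times\SO(s)\to\Spin^s(n)\to\Spin^r(n)$ versus $\SO(n)\times\SO(s)\hookrightarrow\SO(n)\times\SO(r)\to\Spin^r(n)$, which agree by the commuting square above.

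\emph{($\Rightarrow$)} Conversely, suppose the spin$^r$ structure $(P,\Phi)$ is induced from a spin$^s$ structure $(Q,\Psi)$, so $P \cong Q\times_{\Spin^s(n)}\Spin^r(n)$ over $FM$. Apply $\xi^r_n$ to both sides: the auxiliary bundle $\hat P = P\times_{\xi^r_n}\SO(r)$ becomes $Q\times_{\Spin^s(n)}\SO(r)$, where $\Spin^s(n)$ acts on $\SO(r)$ through the composite $\Spin^s(n)\xrightarrow{\iota^{sr}_n}\Spin^r(n)\xrightarrow{\xi^r_n}\SO(r)$. But this composite factors as $\Spin^s(n)\xrightarrow{\xi^s_n}\SO(s)\hookrightarrow\SO(r)$, again by the commuting diagram defining the inclusions. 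Hence $\hat P = \big(Q\times_{\xi^s_n}\SO(s)\big)\times_{\SO(s)}\SO(r) = \hat Q\times_{\SO(s)}\SO(r)$, which exhibits $\hat Q$ as a reduction of $\hat P$ to $\SO(s)$.

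The main obstacle is bookkeeping rather than conceptual: one must pin down precisely the commuting square of group homomorphisms $\SO(n)\times\SO(s)\to\SO(n)\times\SO(r)$, $\Spin^s(n)\to\Spin^r(n)$ together with $\xi^s_n,\xi^r_n$ and the two covering maps $\varphi^{s,n},\varphi^{r,n}$, and verify that each passage between "reduction of structure group" and "associated/extended bundle" is the correct one and that these constructions are mutually inverse where claimed. In particular, in the ($\Leftarrow$) direction one should be slightly careful that $Q$ as defined genuinely has structure group $\Spin^s(n)$ and not a quotient — this is where one uses that $\varphi^{s,n}$ is a $2$-sheeted (in particular surjective, connected-fibre) covering, so that pulling it back along the classifying map of $FM\tilde\times\hat Q$ yields a connected principal $\Spin^s(n)$-bundle rather than two disjoint copies. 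Everything else is a routine diagram chase once the group-level diagram is fixed.
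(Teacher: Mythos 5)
Your proof of the ``only if'' direction is correct and is essentially the paper's argument: apply the associated-bundle functor along $\xi^r_n$ to the isomorphism $P\cong Q\times_{\Spin^s(n)}\Spin^r(n)$ and use the commuting square relating $\xi^s_n$, $\iota^{sr}_n$, $\xi^r_n$ and $j^{sr}\colon\SO(s)\hookrightarrow\SO(r)$ to identify $\hat P$ with $\hat Q\times_{\SO(s)}\SO(r)$.

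The ``if'' direction, however, rests on a false structural claim. You assert that $P$ is recovered from $FM\tilde\times\hat P$ as an associated bundle $(FM\tilde\times\hat P)\times_{\SO(n)\times\SO(r)}\Spin^r(n)$, and you similarly define $Q\coloneqq(FM\tilde\times\hat Q)\times_{\SO(n)\times\SO(s)}\Spin^s(n)$. These expressions are not well-defined: the associated-bundle construction requires a group homomorphism \emph{from} the structure group, but here the available map goes the other way, $\varphi^{s,n}\colon\Spin^s(n)\to\SO(n)\times\SO(s)$. Reinterpreting the expression as ``the lift of structure group along $\varphi^{s,n}$'' does not help either: since $\ker\varphi^{s,n}\cong\Z_2$ is central, $B\Spin^s(n)\to B(\SO(n)\times\SO(s))$ is a fibration with fibre $B\Z_2\simeq\mathbb{RP}^\infty$, not a covering, so a lift is neither automatic nor unique --- it is an obstruction problem, with inequivalent solutions parametrised by $H^1(M;\Z_2)$. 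Concretely, for $r=1$ your claim would say that a spin structure on $M$ is determined by $FM$ alone, which is false. Because your construction of $Q$ never refers to $P$ or to the covering $\theta\colon P\to FM\tilde\times\hat P$, there is no mechanism by which it could produce a spin$^s$ structure inducing the \emph{given} $P$, as opposed to some unrelated spin$^r$ structure (or none at all).

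The missing idea is precisely to use $\theta$: given a reduced $\SO(s)$-subbundle $\Sigma\subseteq\hat P$, take $Q\coloneqq\theta^{-1}\big(FM\tilde\times\Sigma\big)\subseteq P$. One then checks that $Q$ is a principal $\Spin^s(n)$-bundle (this is where the commuting square of group homomorphisms and the fact that $\theta$ is a two-sheeted covering enter), that restricting $\Phi$ gives a spin$^s$ structure, and that enlarging the structure group along $\iota^{sr}_n$ recovers $(P,\Phi)$. This preimage construction is the content of the paper's proof and is what your argument needs in place of the ill-defined ``associated bundle.''
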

\begin{proof}
The proof generalises the one in~\cite{moroianu}. One implication follows easily from the commutativity of the diagram 
\[ 
    \begin{tikzcd}[row sep=1cm, column sep=1cm]
        & \Spin^s(n) \arrow{d}{\xi_{n}^{s}} \arrow{r}{\iota_{n}^{sr}}  & \Spin^r (n) \arrow{d}{\xi_{n}^{r}}\\
        & \SO(s) \arrow{r}{j^{sr}}  &\SO(r)
        \end{tikzcd} \, ,
\]
where $j^{sr}$ is the inclusion of $\SO(s)$ into $\SO(r)$ as the top left-hand block. For the other implication, let $\Sigma$ be a reduced $\SO(s)$-subbundle of the auxiliary bundle $\widehat{P}$ of the spin$^r$-structure $(P,\Phi)$. Then, the preimage of $FM \widetilde{\times} \Sigma$ under the map $\theta \colon P \to FM \widetilde{\times} \widehat{P}$ is a spin$^s$ structure from which the original spin$^r$ structure is induced. 
\end{proof}
 
A principal $\SO(r)$-bundle admits a reduction of the structure group to $\SO(s)$ if and only if the associated rank-$r$ vector bundle admits $r-s$ global orthonormal sections. In the construction of Proposition~\ref{prop:hol}, the auxiliary rank-$n$ vector bundle is the tangent bundle $TM$. Take, for instance, an even-dimensional sphere $S^{2n}$. By the hairy ball theorem, it does not admit a nowhere-vanishing vector field, so in particular the spin$^{2n}$ structure constructed in Proposition~\ref{prop:hol} is not induced by any spin$^r$ structure, for any $r < 2n$. This argument generalises to a wide class of manifolds, namely those closed oriented Riemannian manifolds with non-zero Euler characteristic class. Examples of these include $S^{2n}$, $\mathbb{CP}^n$, $\mathbb{HP}^n$ and $\mathbb{OP}^2$. For more homogeneous examples, see~\cite{Wang49}. Note also that every odd-dimensional manifold admits a nowhere-vanishing vector field, and hence the spin$^n$ structure given by the holonomy construction of Proposition~\ref{prop:hol} is induced by a spin$^{n-1}$ structure. 

\begin{cor}
Let $M$ be an oriented Riemannian $n$-manifold. If $r >n$, every spin$^r$ structure on $M$ is induced by a spin$^n$ structure on $M$ along the inclusion $\iota_{n}^{n,r} \colon \Spin^n(n) \hookrightarrow \Spin^r(n)$. 
\end{cor}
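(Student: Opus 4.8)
The plan is to combine Lemma \ref{lemma:moroianu_gen} with the standard fact, recalled in the discussion following it, that a principal $\SO(r)$-bundle reduces to $\SO(s)$ exactly when its associated rank-$r$ real vector bundle carries $r-s$ pointwise-orthonormal global sections. Thus, starting from a spin$^r$ structure $(P,\Phi)$ on $M$ with auxiliary $\SO(r)$-bundle $\hat P$ and associated rank-$r$ vector bundle $E\to M$, it is enough to exhibit $r-n$ orthonormal sections of $E$: since $n<r$, Lemma \ref{lemma:moroianu_gen} then applies with $s=n$ and produces a spin$^n$ structure on $M$ inducing $(P,\Phi)$ along $\iota^{nr}_n$.

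To obtain the sections I would run a dimension count. A smooth $n$-manifold has the homotopy type of a CW complex of dimension at most $n$ (for open $M$ one can even take dimension $<n$). Producing $k$ orthonormal sections of $E$ amounts to finding a section of the associated bundle with fibre the Stiefel manifold $V_k(\R^r)$ of orthonormal $k$-frames in $\R^r$. With $k=r-n$, the fibre $V_{r-n}(\R^r)$ is $(n-1)$-connected, so $\pi_i\bigl(V_{r-n}(\R^r)\bigr)=0$ for every $i\le n-1$. Building the section skeleton by skeleton, the obstructions lie in the groups $H^{i+1}\bigl(M;\pi_i(V_{r-n}(\R^r))\bigr)$ with local coefficients, and these vanish for all $i$: when $i\le n-1$ the coefficient group is trivial, and when $i\ge n$ the cohomology group vanishes because $\dim M=n$. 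Hence $E$ admits $r-n$ orthonormal sections. (Alternatively, without obstruction theory: a generic section of a rank-$m$ bundle over an $n$-complex with $m>n$ is nowhere zero by transversality to the zero section, whose codimension is $m$; splitting off the trivial line subbundle it spans and iterating $r-n$ times lowers the rank from $r$ to $n$, producing the orthonormal sections.)

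Feeding this back into Lemma \ref{lemma:moroianu_gen} yields the corollary. The only point that demands a moment's care is the book-keeping in the obstruction argument — lining up the connectivity of $V_{r-n}(\R^r)$ against $\dim M$ so that every obstruction group is forced to vanish — but this is routine, and no genuine difficulty arises.
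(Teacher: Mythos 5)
Your proposal is correct and follows essentially the same route as the paper: reduce via Lemma \ref{lemma:moroianu_gen} to producing $r-n$ orthonormal sections of the auxiliary rank-$r$ bundle, then use the classical rank-versus-dimension argument. The paper simply cites this fact from Hirsch and applies it inductively (exactly your parenthetical transversality alternative), while you unpack it via obstruction theory and the $(n-1)$-connectivity of $V_{r-n}(\R^r)$; these are the same standard argument in different packaging.
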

\begin{proof}
Recall that, if $r > n$, every rank-$r$ vector bundle over an $n$-manifold has a nowhere-vanishing section~\cite{hirsch}. Apply this inductively to the auxiliary vector bundle of a spin$^r$ structure on $M$. 
\end{proof}

Using Lemma~\ref{lemma:moroianu_gen}, one can refine Corollary~\ref{cor:spin_type_bounded}: 

\begin{cor}\label{cor:improvement}
Let $n > 1$. An oriented Riemannian $n$-manifold $M$ has spin type $1 \leq \Sigma(M) \leq n-1$. 
\end{cor}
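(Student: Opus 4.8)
The plan is to use the geometric characterisation of spin$^r$ manifolds in Proposition \ref{prop:characterisation_gen_spin} rather than the canonical spin$^n$ structure of Proposition \ref{prop:hol}: the latter has auxiliary bundle $TM$, which need not admit a nowhere-vanishing section (for instance when $M=S^{2k}$), so Lemma \ref{lemma:moroianu_gen} cannot be applied to it directly. Instead, since $n>1$, I would invoke Whitney's immersion theorem to obtain an immersion $f\colon M\to\R^{2n-1}$. As $\R^{2n-1}$ is parallelisable it is in particular spin, and $f$ has codimension $(2n-1)-n=n-1$; the equivalence of (1) and (3) in Proposition \ref{prop:characterisation_gen_spin} then gives that $M$ is spin$^{n-1}$, so $\Sigma(M)\le n-1$, while $\Sigma(M)\ge 1$ is immediate from the definition.

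To make the connection with Lemma \ref{lemma:moroianu_gen} explicit --- this being the refinement of Corollary \ref{cor:spin_type_bounded} advertised before the statement --- one can also phrase the same argument as follows. Pulling back $T\R^{2n-1}$ along $f$ yields a splitting $TM\oplus\nu\cong\underline{\R}^{2n-1}$ into a trivial bundle, where $\nu$ is the rank-$(n-1)$ normal bundle of the immersion; since $w_1(\nu)=w_1(TM\oplus\nu)+w_1(TM)=0$, the bundle $\nu$ is orientable. Hence $TM\oplus(\nu\oplus\underline{\R})$ is trivial, in particular spin, and so it defines a spin$^n$ structure on $M$ whose auxiliary $\SO(n)$-bundle --- the frame bundle of $\nu\oplus\underline{\R}$ --- manifestly reduces to $\SO(n-1)$. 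By Lemma \ref{lemma:moroianu_gen}, that spin$^n$ structure is induced by a spin$^{n-1}$ structure, which is the claim.

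I do not anticipate a serious difficulty once Whitney's theorem is invoked; the two points requiring care are that the immersion into $\R^{2n-1}$ genuinely needs $n\ge 2$ (the circle does not immerse in $\R$), which is exactly the hypothesis $n>1$, and the orientability of the normal bundle, which is automatic here. The more conceptual obstacle is that some input of this kind seems unavoidable: an ``internal'' argument using only the holonomy construction of Proposition \ref{prop:hol} and Lemma \ref{lemma:moroianu_gen} fails for even-dimensional closed manifolds with non-zero Euler characteristic, since there $TM$ admits no nowhere-vanishing section. What is really needed is an orientable rank-$(n-1)$ bundle $E$ with $w_2(E)=w_2(TM)$ --- equivalently, with $TM\oplus E$ spin --- and the normal bundle of an immersion into $\R^{2n-1}$ supplies exactly such an $E$.
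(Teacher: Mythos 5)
Your proof is correct, and it follows a genuinely different route from the paper's. The paper splits into cases: for closed $M$ it invokes Cohen's immersion theorem (an immersion into $\R^{2n-\alpha(n)}$, as recalled in Remark \ref{remark:cohen}), while for non-compact $M$ it uses the fact that any rank-$n$ vector bundle over an open $n$-manifold admits a nowhere-vanishing section, applied via Lemma \ref{lemma:moroianu_gen} to the auxiliary bundle of a spin$^n$ structure furnished by Corollary \ref{cor:spin_type_bounded}. You instead apply Whitney's immersion theorem uniformly --- every $n$-manifold with $n>1$, compact or not, immerses in $\R^{2n-1}$ --- and read off $\Sigma(M)\le n-1$ directly from the equivalence $(1)\Leftrightarrow(3)$ of Proposition \ref{prop:characterisation_gen_spin}; your second formulation via the normal bundle $\nu$ and Lemma \ref{lemma:moroianu_gen} is an equivalent repackaging. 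The paper's route yields the sharper bound $n-\alpha(n)$ in the closed case at the cost of the case split; your route proves exactly the stated inequality in one stroke and without any compactness dichotomy. Your remark that the canonical holonomy-based spin$^n$ structure of Proposition \ref{prop:hol} cannot serve as a starting point when $\chi(M)\ne 0$ matches the paper's own discussion in the paragraph preceding the corollary, and the normal bundle of the Whitney immersion is precisely the concrete rank-$(n-1)$ replacement for $TM$ that one needs.
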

\begin{proof}
If $M$ is compact, this follows from Cohen's theorem, as in Remark~\ref{remark:cohen}. If $M$ is non-compact, recall that every rank-$n$ vector bundle over $M$ has a nowhere-vanishing section~\cite{hirsch}. Hence, by Lemma~\ref{lemma:moroianu_gen}, every spin$^n$ structure on $M$ comes from a spin$^{n-1}$ structure on $M$.  
\end{proof}

\section{Invariance} \label{section:invariance}

 We now turn to the case where a Lie group $G$ acts smoothly on a manifold $M$, and study the notion of $G$-invariance of these structures. Later, we will focus on the case where $G$ acts transitively on $M$, making it into a homogeneous space. 
 
The main result of this section is Theorem~\ref{cor:inv_gen_spin}, which gives a classification of $G$-invariant spin$^r$ structures on homogeneous $G$-spaces up to $G$-equivariant equivalence. Its proof relies on Lemmata~\ref{lemma:lift} and~\ref{lemma:lift_gen}. First, we need some preliminaries on equivariant bundles. 

 \subsection{Equivariant bundles}

\begin{deff}
    Let $M$ be a smooth manifold equipped with a left action of a Lie group $G$. A \emph{$G$-equivariant structure} on a principal $K$-bundle $\pi \colon P \to M$ is a left $G$-action on $P$ by bundle homomorphisms such that $\pi$ is $G$-equivariant. A principal bundle with a choice of $G$-equivariant structure is a \emph{$G$-equivariant bundle}. 
\end{deff}

\begin{ex}\label{ex:trivial}
    Let $G/H$ be a homogeneous space, and $K$ a Lie group. 
    \begin{enumerate}
        \item If $\varphi \colon H \to K$ is a Lie group homomorphism, the principal $K$-bundle 
        \[ 
        G \times_{\varphi} K \to G/H \, , \quad [g,k] \mapsto gH
        \]
        has a natural $G$-equivariant structure given by 
        \[ 
        g' \cdot [g,k] := [g'g,k] \, .  
        \]
        In part (1) of Lemma \ref{lemma:lift}, we will see that every $G$-equivariant bundle over a homogeneous $G$-space is of this form. 
        \item $G$-equivariant structures on the trivial bundle $G/H \times K \to G/H$ are of the form 
        \[ 
        g' \cdot (gH , k) := (g'gH , \alpha(g , g') k) \, , 
        \]
        where $\alpha \colon G \times G \to K$ is a smooth map satisfying $\alpha(\cdot,e_G) = e_K$ and
        \[ 
        \forall g , g' , g'' \in G \colon \quad \alpha(g , g' g'') = \alpha(g'' g , g') \alpha(g , g'') \, . 
        \]
        If $\alpha$ is independent of the first argument, we call the corresponding $G$-equivariant structure \emph{special}. 
    \end{enumerate}
\end{ex}

\begin{deff}
    Let $P$ be a $G$-equivariant principal bundle over the homogeneous space $G/H$. We say that $P$ is \emph{strongly $G$-trivial} if it is $G$-equivariantly isomorphic to the trivial bundle equipped with a \emph{special} $G$-equivariant structure -- see part (2) of Example~\ref{ex:trivial}. 
\end{deff}

\begin{lemma}\label{lemma:lift}
     Let $G/H$ be a homogeneous space, $K$ a Lie group, and $\widehat{\pi} \colon P \to G/H$ a $G$-equivariant principal $K$-bundle. Then, there exists a Lie group homomorphism $\varphi \colon H \to K$ such that:  
     \begin{enumerate} 
        \item $P$ is $G$-equivariantly bundle-isomorphic to 
        \[ 
        \pi \colon G \times_{\varphi} K \to G/H \, , \quad [g,k] \mapsto gH 
        \, .
        \]
        Moreover, $\varphi$ is unique up to conjugation by elements of $K$. 
        \item $P$ is trivial if and only if $\varphi$ extends to a smooth map $\widetilde{\varphi} \colon G \to K$ satisfying \begin{equation}\label{eq:property_trivial_bundle}
        \forall g \in G , h \in H \colon \quad \widetilde{\varphi}(gh) = \widetilde{\varphi}(g) \widetilde{\varphi}(h) \, . 
        \end{equation}
        \item $P$ is strongly $G$-trivial if and only if $\varphi$ extends to a Lie group homomorphism $\widetilde{\varphi} \colon G \to K$.  
     \end{enumerate}
 \end{lemma}
 \begin{proof} 
    A proof of part (1) can be found in~\cite[Lem.~1.4.5]{CSbook}. As we use ideas from this proof later, we include it here. Note that the Lie group $G \times K$ acts on $P$ by 
     \begin{equation*}
         (g,k) \cdot p \coloneqq g \cdot p \cdot k^{-1} \, , 
     \end{equation*}
     for every $g \in G$, $k \in K$ and $p\in P$. Note that, as the $G$-action on $P$ is by $K$-bundle homomorphisms, this is a well-defined action of $G \times K$ on $P$. Moreover, this action is transitive, as the base space is a homogeneous $G$-space and $K$ acts transitively on each fibre. Fix $p_0 \in P$ with projection $o \coloneqq e_G H \in G/H$. Let $H_{p_0} \subseteq G \times K$ be the stabiliser of $p_0$. The first projection $H_{p_0} \to G$ lands in $H$, because the right $K$-action does not change the fibre. And the resulting homomorphism $H_{p_0} \to H$ is in fact an isomorphism, since the $K$-action is simply transitive on the fibres. Denote by $\widehat{\varphi}$ the inverse isomorphism, and define $\varphi \coloneqq \pr_2 \circ \widehat{\varphi} \colon H \to K$. Now, it is straightforward to check that the map 
     \begin{align*}
         \begin{split}
      G \times_{\varphi} K &\to P\\
         [g,k] &\mapsto g p_0 k 
         \end{split}   
     \end{align*}
     is a $G$-equivariant $K$-bundle isomorphism. 

     For the last assertion, suppose $\varphi_1,\varphi_2 \colon H \to K$ are Lie group homomorphisms, and suppose that there exists a $G$-equivariant $K$-bundle isomorphism $f \colon G \times_{\varphi_1} K \to G \times_{\varphi_2} K$. Then, for each $g \in G$ and $k \in K$,  
     \begin{equation*}
         f([g,k]) = f(g \cdot [e_G,e_K] \cdot k) = g \cdot f([e_G,e_K]) \cdot k \, , 
     \end{equation*}
     and so $f$ is determined by $f([e_G,e_K])$. As $f$ is a bundle homomorphism, $f([e_G,e_K])$ lies over $o$, and hence there exists $t \in K$ such that $f([e_G,e_K]) = [e_G,t]$. Now, for every $g \in G$, $k \in K$ and $h \in H$, we have that 
     \begin{align*}
         [g,tk] &= g \cdot [e_G,t] \cdot k = g \cdot f([e_G,e_K]) \cdot k = f(g \cdot [e_G,e_K] \cdot k) \\
         &= f([g,k]) = f([gh,\varphi_1(h)^{-1} k]) = [gh,t\varphi_1(h)^{-1} k] = [g,\varphi_2(h) t \varphi_1(h)^{-1} k ] \, . 
     \end{align*}
     Therefore, for each $h \in H$, $\varphi_2(h) = t \varphi_1(h) t^{-1}$. 
    
    For part (2), suppose first that $P$ is a trivial bundle, so that there is a bundle isomorphism $F \colon G \times_{\varphi} K \to G/H \times K$. Such a map has the form $[g,k] \mapsto (gH , \widetilde{\varphi}(g) k)$, for some smooth map $\widetilde{\varphi} \colon G \to K$, which we can take so that $\widetilde{\varphi}(e_G) = e_K$. Note that, for $g \in G$, $h \in H$, and $k \in K$,  
    \begin{align*}
     (gH , \widetilde{\varphi}(g) k) = F([g,k]) = F([gh, \varphi(h)^{-1} k]) = (ghH , \widetilde{\varphi}(gh) \varphi(h)^{-1} k) = (gH , \widetilde{\varphi}(gh) \varphi(h)^{-1} k) \, . 
    \end{align*}
    Hence, $\widetilde{\varphi}(gh) = \widetilde{\varphi}(g) \varphi(h)$. In particular, as $\widetilde{\varphi}(e_G) = e_K$, $\widetilde{\varphi}$ is an extension of $\varphi$. 
    
    Conversely, suppose that $\varphi$ admits an extension to $G$ satisfying~\eqref{eq:property_trivial_bundle}. Then, we can trivialise the bundle $G \times_{\varphi} K$ via the map $F$ defined above. 

    Let us now prove part (3). If such a homomorphism $\widetilde{\varphi}$ exists, we can equip the trivial bundle $G/H \times K$ with the special $G$-action 
    \[ 
    g' \cdot (gH , k) := (g'gH , \widetilde{\varphi}(g') k) \, , 
    \]
    and the proof of (1) shows that this bundle is $G$-equivariantly isomorphic to $P$. 
    
    Conversely, suppose that $P$ is strongly $G$-trivial. Then, $P$ is $G$-equivariantly isomorphic to the trivial bundle $G/H \times K$ equipped with a special $G$-equivariant structure, i.e., one given by 
    \[ 
    g' \cdot (gH , k) := (g'g H , \alpha(g') k) \, , 
    \]
    for a Lie group homomorphism $\alpha \colon G \to K$. Running the proof of (1) for the trivial bundle with this $G$-action, we see that this bundle is also $G$-equivariantly isomorphic to $G \times_{\alpha \restr{H}} K$. Hence, by the final assertion of part (1), there exists $t \in K$ such that $\alpha \restr{H} = t \varphi t^{-1}$. Taking $\widetilde{\varphi} = t^{-1} \alpha t$ finishes the proof.  
 \end{proof}
 
 \begin{ex}
 If $G/H$ is an $n$-dimensional oriented Riemannian homogeneous space, the smooth action of $G$ on $G/H$ induces a smooth action of $G$ on the positively oriented orthonormal frame bundle of $G/H$ by $\SO(n)$-bundle homomorphisms, covering the $G$-action on $G/H$. The Lie group homomorphism $\varphi \colon H \to \SO(n)$ given by Lemma~\ref{lemma:lift} is precisely the isotropy representation. 
 \end{ex}
 
 We can generalise Lemma~\ref{lemma:lift}: 
 
 \begin{lemma}\label{lemma:lift_gen}
      Let $G/H$ be a homogeneous space. Let $K,L$ be Lie groups. Let $\lambda \colon K \to L$ and $\eta \colon H \to L$ be Lie group homomorphisms, with $\lambda$ surjective. Let $P$ be a $G$-equivariant lift of the structure group of $G \times_{\eta} L$ to $K$ along $\lambda$. Then, there exists a Lie group homomorphism $\widetilde{\eta} \colon H \to K$ such that $P$ is $G$-equivariantly bundle-isomorphic to $G \times_{\widetilde{\eta}} K$ covering the identity on $G \times_{\eta} L$, and $\eta = \lambda \circ \widetilde{\eta}$. Moreover, $\widetilde{\eta}$ is unique up to conjugation by elements of $\ker(\lambda)$. 
     \end{lemma}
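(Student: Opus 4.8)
The plan is to reduce to Lemma~\ref{lemma:lift} and then use the structure map $\Phi\colon P\to G\times_\eta L$ to pin down the homomorphism it produces. By hypothesis, $P$ is a principal $K$-bundle over $G/H$ carrying a smooth $G$-action by $K$-bundle homomorphisms lifting the $G$-action on $G/H$, and $\Phi$ is $G$-equivariant and $\lambda$-equivariant (i.e.\ $\Phi(x\cdot k)=\Phi(x)\cdot\lambda(k)$). So Lemma~\ref{lemma:lift} applies directly to $P$ with this $G$-action.

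First, Lemma~\ref{lemma:lift} yields a Lie group homomorphism $\tilde\eta_0\colon H\to K$, unique up to conjugation by $K$, together with a $G$-equivariant $K$-bundle isomorphism $\Theta\colon G\times_{\tilde\eta_0}K\xrightarrow{\ \sim\ }P$. Set $\Psi\coloneqq\Phi\circ\Theta\colon G\times_{\tilde\eta_0}K\to G\times_\eta L$; this is a $G$-equivariant, $\lambda$-equivariant bundle map covering the identity of $G/H$. As in the last part of the proof of Lemma~\ref{lemma:lift}, $\Psi$ is determined by its value on $[e_G,e_K]$: since $\Psi([e_G,e_K])$ lies over $o=e_GH$, write $\Psi([e_G,e_K])=[e_G,\ell]$ for some $\ell\in L$, and then $G$- and $\lambda$-equivariance force $\Psi([g,k])=[g,\ell\,\lambda(k)]$ for all $g\in G$, $k\in K$. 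Requiring this formula to be well defined on $G\times_{\tilde\eta_0}K$ (i.e.\ invariant under $[g,k]\mapsto[gh,\tilde\eta_0(h)^{-1}k]$) gives $\eta(h)\,\ell\,\lambda(\tilde\eta_0(h))^{-1}=\ell$ for all $h\in H$, that is, $\lambda\circ\tilde\eta_0=\ell^{-1}\,\eta(\cdot)\,\ell$.

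Next, since $\lambda$ is surjective, choose $k_0\in K$ with $\lambda(k_0)=\ell$ and put $\tilde\eta\coloneqq k_0\,\tilde\eta_0(\cdot)\,k_0^{-1}$. Then $\lambda\circ\tilde\eta=\lambda(k_0)\,(\lambda\circ\tilde\eta_0)\,\lambda(k_0)^{-1}=\eta$, as required. Conjugate homomorphisms give $G$-equivariantly isomorphic associated bundles via $[g,k]\mapsto[g,k_0k]$, so composing with $\Theta$ produces a $G$-equivariant $K$-bundle isomorphism $\Theta'\colon G\times_{\tilde\eta}K\xrightarrow{\ \sim\ }P$; a direct computation ($\Phi\circ\Theta'([g,k])=\Psi([g,k_0^{-1}k])=[g,\ell\,\lambda(k_0)^{-1}\lambda(k)]=[g,\lambda(k)]$) shows that under $\Theta'$ the map $\Phi$ corresponds to the standard projection $[g,k]\mapsto[g,\lambda(k)]$, which covers the identity of $G\times_\eta L$. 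This proves existence. For uniqueness, suppose $\tilde\eta_1,\tilde\eta_2\colon H\to K$ both satisfy the conclusion, with isomorphisms $\Theta_i\colon G\times_{\tilde\eta_i}K\xrightarrow{\ \sim\ }P$ intertwining $\Phi$ with the standard projections. Then $f\coloneqq\Theta_2^{-1}\circ\Theta_1$ is a $G$-equivariant $K$-bundle isomorphism $G\times_{\tilde\eta_1}K\to G\times_{\tilde\eta_2}K$ intertwining the two standard projections to $G\times_\eta L$. By the computation in the proof of Lemma~\ref{lemma:lift}, $f([e_G,e_K])=[e_G,t]$ for some $t\in K$, $f([g,k])=[g,tk]$, and $\tilde\eta_2=t\,\tilde\eta_1(\cdot)\,t^{-1}$; the intertwining condition then reads $[g,\lambda(t)\lambda(k)]=[g,\lambda(k)]$ in $G\times_\eta L$, forcing $\lambda(t)=e_L$, i.e.\ $t\in\ker(\lambda)$. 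Hence $\tilde\eta$ is unique up to conjugation by $\ker(\lambda)$.

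The routine parts are the verifications that the maps $[g,k]\mapsto[g,k_0k]$ and $f$ are well-defined $G$-equivariant $K$-bundle morphisms. The one point requiring care is the bookkeeping ensuring that, after replacing $\tilde\eta_0$ by the conjugate $\tilde\eta$ with $\lambda(k_0)=\ell$, the transported structure map becomes \emph{exactly} the standard projection rather than merely a $\lambda$-equivariant map over $G/H$; this is precisely what makes the phrase ``covering the identity on $G\times_\eta L$'' hold on the nose, and what couples the choice of $k_0$ to the value $\ell$ appearing in $\Psi$.
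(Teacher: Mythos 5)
Your proof is correct and mathematically equivalent to the paper's, but it is packaged slightly differently. The paper exploits surjectivity of $\lambda$ \emph{before} running the Lemma~\ref{lemma:lift} argument: it picks $p_0\in P$ with $f(p_0)=[e_G,e_L]$, and then the $\tilde\eta$ produced by the Lemma~\ref{lemma:lift} construction automatically satisfies $\lambda\circ\tilde\eta=\eta$, so the structure map is the standard projection on the nose with no post-processing. You instead apply Lemma~\ref{lemma:lift} as a black box with an arbitrary basepoint, obtain $\tilde\eta_0$ and the ``defect'' element $\ell\in L$, and then use surjectivity of $\lambda$ to choose $k_0$ with $\lambda(k_0)=\ell$ and conjugate $\tilde\eta_0$ into the desired $\tilde\eta$. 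These are the same use of surjectivity in two disguises: the paper absorbs the conjugation into the choice of basepoint. Your black-box version has the mild advantage of not reopening the proof of Lemma~\ref{lemma:lift}, at the cost of the extra bookkeeping with $\ell$ and $k_0$ that you explicitly flag. One small slip: you state the well-definedness-inducing map as $[g,k]\mapsto[g,k_0k]$, but for $\tilde\eta=k_0\,\tilde\eta_0(\cdot)\,k_0^{-1}$ the correct bundle isomorphism $G\times_{\tilde\eta}K\to G\times_{\tilde\eta_0}K$ is $[g,k]\mapsto[g,k_0^{-1}k]$; your subsequent computation $\Phi\circ\Theta'([g,k])=\Psi([g,k_0^{-1}k])$ does use the correct map, so this is a typo in the exposition rather than an error in the argument. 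The uniqueness argument matches the paper's.
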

     \begin{proof}
         Let $f \colon P \to G \times_{\eta} L$ be a $G$-equivariant lift of the structure group along $\lambda$. As $\lambda$ is surjective, there exists $p_0 \in P$ such that $f(p_0) = [e_G,e_L]$. Applying the same argument of the proof of part (1) of Lemma~\ref{lemma:lift} with this choice of $p_0$, one obtains a Lie group homomorphism $\widetilde{\eta}\colon H \to K$ such that $P$ is $G$-equivariantly bundle-isomorphic to $G \times_{\widetilde{\eta}} K$, and we can take this isomorphism to send $p_0$ to $[e_G,e_K]$. Hence, we have a $G$-equivariant lift of the structure group 
         \begin{align*}
             \begin{split}
                 G \times_{\widetilde{\eta}} K & \overset{\cong}{\longrightarrow} P \to G \times_{\eta} L\\
             [g,k] &\mapsto g p_0 k \mapsto [g,\lambda(k)] \, . 
             \end{split}   
         \end{align*}
         One readily checks that this map is well-defined if and only if $\lambda \circ \widetilde{\eta} = \eta$. 

         Now, suppose that $\widetilde{\eta}_1 , \widetilde{\eta}_2$ are such that 
         \[ 
            \lambda \circ \widetilde{\eta}_1 = \lambda \circ \widetilde{\eta}_2 = \eta \, . 
         \]
         Suppose further that there exists a $G$-equivariant $K$-bundle isomorphism $f$ such that the diagram
         \[
             \begin{tikzcd}[row sep=1cm, column sep=1.5cm]
                 & G \times_{\widetilde{\eta}_1} K \arrow{rr}{f} \arrow{dr}[swap]{} &  & G \times_{\widetilde{\eta}_2} K \arrow{dl}[swap]{}\\
                 & & G \times_{\eta} L & 
                 \end{tikzcd}    
         \]
         commutes. Then, there exists $t \in \ker(\lambda)$ such that 
         \[ 
                f \left( \left[ e_G , e_K \right] \right) = \left[ e_G , t \right] \, . 
         \]
         Hence, for every $h \in H$, 
         \[ 
            \left[ e_G , t \right] = f \left( \left[ e_G , e_K \right] \right) = f \left( \left[ h , \widetilde{\eta}_1 (h)^{-1} \right] \right) = \left[ h , t \widetilde{\eta}_1 (h)^{-1} \right] = \left[ e_G , \widetilde{\eta}_2 (h) t \widetilde{\eta}_1 (h)^{-1} \right] \, . 
         \] 
         This shows that 
         \[ 
                 \widetilde{\eta}_2 = t \widetilde{\eta}_1 t^{-1} \, . 
         \]
        Conversely, if $\widetilde{\eta}_1$ and $\widetilde{\eta}_2$ are conjugate by an element $t \in \ker(\lambda)$, we can define $f$ by 
        \[ 
            f \left( \left[ g , k \right] \right) = \left[ g , t k \right] \, . 
        \]
     \end{proof}
 
 \begin{rem}
 We recover Lemma~\ref{lemma:lift} by taking $L$ to be the trivial one-element Lie group. 
 \end{rem}
 
 \begin{ex}
     Taking $\lambda = \lambda_n \colon \Spin(n) \to \SO(n)$ and $\eta = \sigma$ the isotropy representation in Lemma~\ref{lemma:lift_gen}, we recover~\cite[Prop.~1.3]{DKL}. 
\end{ex}

 \subsection{Invariant \texorpdfstring{spin$^r$}{spinʳ} structures}

Let $G$ be a Lie group acting by orientation-preserving isometries on an oriented Riemannian $n$-manifold $M$ with positively oriented orthonormal frame bundle $FM$.

\begin{deff}
A \emph{$G$-invariant spin$^r$ structure} on $M$ is a spin$^r$ structure $(P,\Phi)$ where $P$ and $\Phi$ are $G$-equivariant.   

Two $G$-invariant spin$^r$ structures $\left( P_1 , \Phi_1 \right), \left( P_2 , \Phi_2 \right)$ on $M$ are said to be \emph{$G$-equivariantly equivalent} if there exists a $G$-equivariant $\Spin^r(n)$-bundle isomorphism $f \colon P_1 \to P_2$ making the following diagram commute: 
\[
    \begin{tikzcd}[row sep=1cm, column sep=1cm]
        & P_1 \arrow{rr}{f} \arrow{dr}[swap]{\Phi_1} &  & P_2 \arrow{dl}{\Phi_2}\\
        & & FM & 
    \end{tikzcd}    \, . 
\]
 \end{deff}

Finally, we can state our main result: 
 
     \begin{thmm}\label{cor:inv_gen_spin}
         Let $G/H$ be an $n$-dimensional oriented Riemannian homogeneous space with $H$ connected and isotropy representation $\sigma \colon H \to \SO(n)$. Then, there is a bijective correspondence between 
         \begin{itemize}
             \item $G$-invariant spin$^r$ structures on $G/H$ modulo $G$-equivariant equivalence of spin$^r$ structures, and
             \item Lie group homomorphisms $\varphi \colon H \to \SO(r)$ such that $\sigma \times \varphi \colon H \to \SO(n) \times \SO(r)$ lifts to $\Spin^r(n)$ along $\lambda_n^r$ modulo conjugation by an element of $\SO(r)$. 
         \end{itemize}
         \end{thmm}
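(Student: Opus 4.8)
The plan is to realise both sides of the claimed correspondence through the model bundles of Lemmata~\ref{lemma:lift} and~\ref{lemma:lift_gen}. Throughout I would use that $\varphi^{r,n}=(\lambda^r_n,\xi^r_n)$ componentwise, that $\lambda^r_n\colon\Spin^r(n)\to\SO(n)$ is surjective (since $\lambda_n$ is), that $\ker\lambda^r_n=\{[1,\nu]:\nu\in\Spin(r)\}$, and hence that $\xi^r_n$ maps $\ker\lambda^r_n$ onto $\SO(r)$ (since $\lambda_r$ is onto). Here ``$\sigma\times\varphi$ lifts along $\lambda^r_n$'' is understood as ``$\sigma\times\varphi$ lifts along $\varphi^{r,n}$'', as in the proof of Proposition~\ref{prop:hol}. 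First I would invoke Lemma~\ref{lemma:lift} and the example following it to identify the frame bundle of $G/H$, with its canonical $G$-action, $G$-equivariantly with $G\times_\sigma\SO(n)$, and fix this identification once and for all.

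For the forward map, given a $G$-invariant spin$^r$ structure $(P,\Phi)$ I would regard $P$ as a $G$-equivariant lift of the structure group of $G\times_\sigma\SO(n)$ along $\lambda^r_n$ and apply Lemma~\ref{lemma:lift_gen} (with $K=\Spin^r(n)$, $L=\SO(n)$, $\lambda=\lambda^r_n$, $\eta=\sigma$). This yields $\widetilde\sigma\colon H\to\Spin^r(n)$ with $\lambda^r_n\circ\widetilde\sigma=\sigma$, a $G$-equivariant isomorphism $P\cong G\times_{\widetilde\sigma}\Spin^r(n)$ compatible with $\Phi$, and $\widetilde\sigma$ unique up to conjugation by an element of $\ker\lambda^r_n$. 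I would then set $\varphi\coloneqq\xi^r_n\circ\widetilde\sigma$, so that $\varphi^{r,n}\circ\widetilde\sigma=\sigma\times\varphi$ exhibits $\sigma\times\varphi$ as lifting. Replacing $\widetilde\sigma$ by $[1,\nu]\widetilde\sigma(\cdot)[1,\nu]^{-1}$ replaces $\varphi$ by $\lambda_r(\nu)\varphi\lambda_r(\nu)^{-1}$, so the $\SO(r)$-conjugacy class of $\varphi$ is canonical; and a $G$-equivariant equivalence of spin$^r$ structures becomes, via the models, a $G$-equivariant $\Spin^r(n)$-bundle isomorphism over the identity of $G\times_\sigma\SO(n)$, whence the uniqueness clause of Lemma~\ref{lemma:lift_gen} shows the two $\widetilde\sigma$'s, hence the two $\varphi$'s, are conjugate by an element of $\ker\lambda^r_n$, resp.\ $\SO(r)$. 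This defines a well-defined map to conjugacy classes of homomorphisms.

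For the backward map, given $\varphi$ with $\sigma\times\varphi$ lifting along $\varphi^{r,n}$, I would use that $H$ is connected and $\varphi^{r,n}$ is a $2$-sheeted covering (Proposition~\ref{prop:fg}(1)) to get a \emph{unique} Lie group homomorphism $\widetilde\sigma\colon H\to\Spin^r(n)$ with $\varphi^{r,n}\circ\widetilde\sigma=\sigma\times\varphi$; then I would take $P\coloneqq G\times_{\widetilde\sigma}\Spin^r(n)$ with its tautological $G$-action and $\Phi\colon[g,x]\mapsto[g,\lambda^r_n(x)]$, which is well defined because $\lambda^r_n\circ\widetilde\sigma=\sigma$ and is a $G$-equivariant $\Spin^r(n)$-bundle homomorphism, so $(P,\Phi)$ is a $G$-invariant spin$^r$ structure. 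For independence of the representative, if $\varphi$ is replaced by $c\varphi c^{-1}$ I would pick $\nu\in\Spin(r)$ with $\lambda_r(\nu)=c$; then $[1,\nu]\widetilde\sigma(\cdot)[1,\nu]^{-1}$ is the unique lift of $\sigma\times(c\varphi c^{-1})$, and $[g,x]\mapsto[g,[1,\nu]x]$ is the required $G$-equivariant equivalence. Finally, the two maps are mutually inverse essentially by construction: a $\widetilde\sigma$ produced by the forward map already satisfies $\varphi^{r,n}\circ\widetilde\sigma=\sigma\times\varphi$, hence is the unique lift used by the backward map, so we recover $P$ up to $G$-equivariant isomorphism; and conversely $G\times_{\widetilde\sigma}\Spin^r(n)$ with its standard $\Phi$ is already in the normal form of Lemma~\ref{lemma:lift_gen} with basepoint $[e_G,e]$, so the forward map returns the class of $\xi^r_n\circ\widetilde\sigma=\varphi$.

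I expect the only real obstacle to be the bookkeeping that makes the two notions of ``up to conjugation'' match — $G$-equivariant equivalence of spin$^r$ structures on one side versus conjugation by $\SO(r)$ on the other — which is mediated precisely by $\xi^r_n(\ker\lambda^r_n)=\SO(r)$ together with the uniqueness statements in Lemmata~\ref{lemma:lift} and~\ref{lemma:lift_gen}. The connectedness of $H$ enters only to guarantee uniqueness of the lift $\widetilde\sigma$ of $\sigma\times\varphi$ along the covering $\varphi^{r,n}$ (any continuous lift fixing the identity is automatically a homomorphism and is pinned down by the unique lifting property), which is what makes the backward map single-valued.
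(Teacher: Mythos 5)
Your proof is correct and follows essentially the same route as the paper: view a $G$-invariant spin$^r$ structure as a $G$-equivariant reduction of $G\times_\sigma\SO(n)$ along $\lambda_n^r$, apply Lemma~\ref{lemma:lift_gen} to extract $\widetilde\sigma\colon H\to\Spin^r(n)$ unique up to $\ker\lambda_n^r$-conjugacy, then use connectedness of $H$ together with the two-sheeted covering $\varphi^{r,n}$ to pass to $\varphi=\xi_n^r\circ\widetilde\sigma$ up to $\SO(r)$-conjugacy. Your write-up is in fact slightly more careful than the paper's on one point: the paper's proof says ``$\lambda_n^r$ is a covering map,'' which is not literally true for $r>1$ since $\ker\lambda_n^r\cong\Spin(r)$ is positive-dimensional; the covering that does the work is $\varphi^{r,n}$, which is exactly what you use, and your observation that $\xi_n^r$ carries $\ker\lambda_n^r$ onto $\SO(r)$ is precisely what reconciles the two conjugacy relations.
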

         \begin{proof}
             A $G$-invariant spin$^r$ structure is, by definition, a $G$-equivariant lift of the structure group of the positively oriented orthonormal frame bundle $G \times_{\sigma} \SO(n)$. Hence, by Lemma~\ref{lemma:lift_gen}, $G$-invariant spin$^r$ structures on $G/H$ modulo $G$-equivariant equivalence of spin$^r$ structures are in bijection with Lie group homomorphisms $\widetilde{\sigma} \colon H \to \Spin^r(n)$ such that $\lambda_{n}^r \circ \widetilde{\sigma} = \sigma$ modulo conjugation by elements of $\ker(\lambda_n^r)$. But, as $H$ is connected and $\lambda_n^r$ is a covering map, these are in bijection with Lie group homomorphisms $\varphi \colon H \to \SO(r)$ such that $\sigma \times \varphi$ lifts to $\Spin^r(n)$ modulo conjugation by elements of $\SO(r)$. 
         \end{proof}
 
         \begin{rem}
             For the case $r=1$, as $\SO(1)$ is trivial, we get two cases: either $\sigma$ lifts to $\Spin(n)$ or it does not. So we have either a unique such structure or none. This recovers the well-known result that, if a $G$-invariant spin structure exists, it is unique (see e.g.~\cite[Cor.~1.4]{DKL}). 
         \end{rem}

 In the light of Theorem~\ref{cor:inv_gen_spin}, it is clear that, if $r < s$, a $G$-invariant spin$^r$ structure naturally induces a $G$-invariant spin$^s$ structure. So it makes sense to introduce a $G$-invariant analogue of the spin type of Definition~\ref{deff:spin_type}: 

\begin{deff}
Let $G$ be a Lie group acting smoothly by orientation-preserving isometries on an oriented Riemannian manifold $M$. The $G$-invariant spin type of $M$, denoted by $\Sigma(M,G)$, is the least $r \in \mathbb{N}$ such that $M$ admits a $G$-invariant spin$^r$ structure. 
\end{deff}
For example, we know that the unique spin structure of $S^n$ is not $G$-invariant for $G=\SO(n+1),\U((n+1)/2)$~\cite{DKL}, i.e., 
\[ \Sigma(S^n,\SO(n+1)), \, \Sigma(S^n,\U((n+1)/2)) > 1\, . \] 
But there might exist a $G$-invariant spin$^{\mathbb{C}}$ structure. Let us explore this in the next section. 

\section{Spheres} \label{sec:spheres}

The aim of this section is to find the $G$-invariant spin type $\Sigma \left( S^n , G \right)$ of $S^n$ for each $n \in \mathbb{N}$ and each compact and connected Lie group $G$ acting smoothly, transitively and effectively by orientation-preserving isometries on $S^n$. These groups were classified in~\cite{class_spheres}. In~\cite{DKL}, the authors determine when $\Sigma \left( S^n , G \right) = 1$. We will compute the rest (see Table~\ref{table:invariant_spin_type}). 

\begin{thmm}\label{thm:spheres_so}
The $\SO(n+1)$-invariant spin type of $S^n$ is $n$, for $n \neq 4$, and $3$, for $n$=4 (compare with~\cite[Prop.~3.9]{AM21}). In other words, 
\begin{equation*}
    \Sigma(S^n,\SO(n+1)) = 
    \begin{dcases} 
    n, & n \neq 4\\
    3, & n=4 \, .
 \end{dcases} 
\end{equation*}
Moreover, the $\SO(n+1)$-invariant spin$^n$ structures on $S^n$ up to $\SO(n+1)$-equivariant equivalence are classified as follows: 
\begin{enumerate}
\item $n=1$: there is a unique $\SO(2)$-invariant spin structure on $S^1$. 
\item $n=2$: the $\SO(3)$-invariant spin$^{\mathbb{C}}$ structures on $S^2$ are the ones corresponding to $\varphi_s \colon S^1 \cong \SO(2) \to \SO(2) \cong S^1$ given by $z \mapsto z^s$, with $s$ odd. 
\item $n \geq 3$ odd: there exists a unique $\SO(n+1)$-invariant spin$^n$ structure on $S^n$, and it is the one corresponding to the identity map $\SO(n) \to \SO(n)$.  
\item $n=4$: There are two $\SO(5)$-invariant spin$^{\mathbb{H}}$ structures on $S^4$, corresponding to the Lie group homomorphisms 
\[ 
    \SO(4) \to \SO(4) / \mathbb{Z}_2 \cong \SO(3) \times \SO(3) \overset{\pr_i}{\to} \SO(3)
\]
for $i = 1,2$. 
\item $n \geq 6$ even: there exist two different $\SO(n+1)$-invariant spin$^n$ structures on $S^n$, namely the ones corresponding to the identity map $\SO(n) \to \SO(n)$ and to conjugation by a fixed element of $\Ort(n) \setminus \SO(n)$. 
\end{enumerate}
\end{thmm}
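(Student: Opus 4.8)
\section*{Proof proposal}

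The plan is to run everything through Theorem~\ref{cor:inv_gen_spin}, applied to the realisation $S^n=\SO(n+1)/\SO(n)$, for which the isotropy representation $\sigma\colon\SO(n)\to\SO(n)$ is the standard one, i.e.\ the identity map. An $\SO(n+1)$-invariant spin$^r$ structure then amounts to a Lie group homomorphism $\varphi\colon\SO(n)\to\SO(r)$, considered up to conjugation in $\SO(r)$, such that $\sigma\times\varphi\colon\SO(n)\to\SO(n)\times\SO(r)$ admits a lift along the covering $\varphi^{r,n}$ of Proposition~\ref{prop:fg}. Since $\SO(n)$ is connected (and a manifold), the lifting criterion turns this into the single requirement $(\sigma\times\varphi)_{\sharp}\bigl(\pi_1(\SO(n))\bigr)\subseteq\varphi^{r,n}_{\sharp}\bigl(\pi_1(\Spin^r(n))\bigr)$, whose right-hand side is given by Proposition~\ref{prop:fg}; a continuous lift of a homomorphism through such a covering is automatically a homomorphism, by a connectedness argument. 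Hence the proof reduces to (i) classifying the homomorphisms $\varphi\colon\SO(n)\to\SO(r)$ up to conjugation, and (ii) evaluating $\varphi_{\sharp}$ on fundamental groups.

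For $n$ odd with $n\ge 3$ the group $\SO(n)$ is simple with trivial centre, while for $n$ even with $n\ge 6$ its only proper non-trivial normal subgroup is $\{\pm I\}$, with $\SO(n)/\{\pm I\}=\mathrm{PSO}(n)$ of the same dimension as $\SO(n)$. In both cases a dimension count forces every homomorphism $\SO(n)\to\SO(r)$ with $r<n$ to be trivial, and for trivial $\varphi$ one has $(\sigma\times\varphi)_{\sharp}(1)=(1,0)$, which by Proposition~\ref{prop:fg}(3)--(4) (and, for $r=1$, by the fact that $\lambda_n$ is trivial on $\pi_1$ for $n\ge 3$) never lies in $\varphi^{r,n}_{\sharp}(\pi_1(\Spin^r(n)))$; therefore $S^n$ has no $\SO(n+1)$-invariant spin$^r$ structure for $r<n$, so $\Sigma(S^n,\SO(n+1))\ge n$. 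Taking $\varphi=\mathrm{id}$ with $r=n$ gives the diagonal homomorphism, whose image on $\pi_1$ is $\langle(1,1)\rangle$; the lift exists, so $\Sigma(S^n,\SO(n+1))=n$. For the classification of the invariant spin$^n$ structures in these ranges, the same normal-subgroup analysis (using $\mathrm{PSO}(n)\not\cong\SO(n)$, as these have different centres) shows that a non-trivial $\varphi\colon\SO(n)\to\SO(n)$ is an automorphism; every automorphism is the identity on $\pi_1=\mathbb{Z}_2$, hence lifts, and quotienting by conjugation in $\SO(n)$ leaves $\mathrm{Out}(\SO(n))$-many classes --- trivial for $n$ odd $\ge 3$ (the single structure attached to $\mathrm{id}$) and $\mathbb{Z}_2$ for $n$ even $\ge 6$, the non-trivial class being realised by conjugation by any element of $\Ort(n)\setminus\SO(n)$. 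These last two are genuinely inequivalent because the centraliser of $\SO(n)$ in $\Ort(n)$ is $\{\pm I\}\subseteq\SO(n)$, so no reflection conjugation is inner.

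The small cases need separate treatment, since there $\SO(n)$ is abelian or not simple. For $n=1$, $\SO(1)$ is trivial, the unique homomorphism to $\SO(1)$ lifts to $\Spin(1)$, and we get the single invariant spin structure. For $n=2$, a homomorphism $\SO(2)\to\SO(2)$ is $z\mapsto z^s$, and the $\pi_1$-condition via Proposition~\ref{prop:fg}(2) reads $(1,s)\in\langle(1,\pm1)\rangle$, i.e.\ $s$ odd, with no further identification as $\SO(2)$ is abelian; the case $r=1$ fails since $1\notin 2\mathbb{Z}$, so $\Sigma(S^2,\SO(3))=2$ with the family of point~(2). For $n=4$ one classifies the homomorphisms $\varphi\colon\SO(4)\to\SO(3)$: from $\Spin(4)=\SU(2)\times\SU(2)$ and the normal subgroups of $\SU(2)^2$ one obtains, up to conjugation in $\SO(3)$, exactly the trivial map and the two composites $\varphi_i\colon\SO(4)\to\SO(4)/\{\pm I\}\cong\SO(3)\times\SO(3)\xrightarrow{\pr_i}\SO(3)$; computing $\pi_1$ of the double cover $\SO(4)\to\SO(3)\times\SO(3)$ shows $(\varphi_i)_{\sharp}(1)=1$, so by Proposition~\ref{prop:fg}(4) the $\varphi_i$ lift, whereas the trivial one does not, and the cases $r=1,2$ fail (for $r=2$, $\SO(4)$ is perfect, so $\varphi$ is trivial and $(1,0)\notin\langle(1,1)\rangle$); hence $\Sigma(S^4,\SO(5))=3$ with the two structures of point~(4), which are inequivalent since $\ker\varphi_1\ne\ker\varphi_2$. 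The main obstacle is step (i) outside the simple cases, above all $n=4$: one must pin down all homomorphisms $\SO(4)\to\SO(3)$ and then correctly trace the effect of the covering $\SO(4)\to\SO(3)\times\SO(3)$ on fundamental groups, and there is analogous bookkeeping (via $\mathrm{Out}(\SO(n))$ and centraliser computations) needed to decide precisely when two surviving homomorphisms are conjugate in $\SO(r)$; once Proposition~\ref{prop:fg} is available, the $\pi_1$-conditions themselves are routine.
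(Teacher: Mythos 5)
Your proposal is correct and follows essentially the same approach as the paper: reduce to the lifting problem for $\varphi\colon\SO(n)\to\SO(r)$ via Theorem~\ref{cor:inv_gen_spin}, rule out $r<n$ by the scarcity of such homomorphisms (you argue at the group level via normal subgroups and perfectness of $\SO(4)$, where the paper argues at the Lie-algebra level via simplicity of $\mathfrak{so}(n)$ and ideals of $\mathfrak{so}(3)\oplus\mathfrak{so}(3)$ --- these are interchangeable), and check the $\pi_1$-condition with Proposition~\ref{prop:fg}. You additionally spell out the final classification of the spin$^n$ structures through $\mathrm{Out}(\SO(n))$ and a centraliser computation, which the paper compresses into ``standard facts of representation theory.''
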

\begin{proof}
    We can build an $\SO(n+1)$-invariant spin$^n$ structure on $S^n$ as follows. Consider the diagram 
    \[ 
    \begin{tikzcd}[row sep=1cm, column sep=1cm]
        &  & & \Spin^n (n) \arrow{d}{\varphi^{n,n}}\\
        & \SO(n) \arrow{r}{\sigma=id} &\SO(n) \arrow{r}{\Delta} &\SO(n) \times \SO(n)
        \end{tikzcd} \, , 
    \]
    where $\Delta$ is the diagonal map. By Proposition~\ref{prop:fg}, $\Delta \circ \sigma$ lifts to $\Spin^n(n)$, yielding an $\SO(n+1)$-invariant spin$^n$ structure on $S^n$.

Suppose $n \neq 1,2,4$. Let $2 \leq r < n$, and suppose $S^n$ has an $\SO(n+1)$-invariant spin$^r$ structure. Then, there exists a lift $\widetilde{\sigma} \colon \SO(n) \to \Spin^r(n)$, making the following diagram commute: 
\[
\begin{tikzcd}[row sep=1cm, column sep=1cm]
    & & \Spin^r (n) \arrow{dr}{\varphi^{r,n}} &\\
    & \SO(n) \arrow{dr}[swap]{\sigma=id} \arrow{ur}{\widetilde{\sigma}} & & \SO(n) \times \SO(r) \arrow{dl}[near start]{\pr_1} \\
    & & \SO(n) &
\end{tikzcd} \, .
\]

Now consider the Lie group homomorphism $\pr_2 \circ \varphi^{r,n} \circ \widetilde{\sigma} \colon \SO(n) \to \SO(r)$. This induces a Lie algebra homomorphism $\mathfrak{so}(n) \to \mathfrak{so}(r)$. The kernel of this map is an ideal of $\mathfrak{so}(n)$, which is a simple Lie algebra (as $ n \geq 3$ and $n \neq 4$), and hence it is either the whole of $\mathfrak{so}(n)$ or $0$. As $r < n$, it cannot be $0$ for dimensional reasons. Hence, it has to be the whole of $\mathfrak{so}(n)$. As $\SO(n)$ is connected, the Lie exponential map generates the whole group, and hence the Lie group homomorphism has to be trivial. Now, let $\alpha_k$ be a generator of the fundamental group of $\SO(k)$. Then, $\alpha_n \in \pi_1(\SO(n))$ goes, on the one hand, to $(\alpha_n , 0) \in \pi_1(\SO(n) \times \SO(r))$ and, on the other hand, to $(\alpha_n, s\alpha_r)$, for some odd $s$, by Proposition~\ref{prop:fg}, which is a contradiction.  

Now let us consider the case $n=4$. In this case, the Lie algebra $\mathfrak{so}(4) \cong \mathfrak{so}(3) \oplus \mathfrak{so}(3)$, so it is not simple, and the previous argument does not work. First, let us show that $S^4$ does not admit an $\SO(5)$-invariant spin$^2$ structure. Indeed, if such a structure existed, as before, we would have a Lie group homomorphism $\SO(4) \to \SO(2)$, and hence a Lie algebra homomorphism $\mathfrak{so}(3) \oplus \mathfrak{so}(3) \to \mathfrak{so}(2)$. The kernel of this map is an ideal of $\mathfrak{so}(3) \oplus \mathfrak{so}(3)$, and so it is either $0$, $\mathfrak{so}(3)$ or the whole of $\mathfrak{so}(3) \oplus \mathfrak{so}(3)$. It cannot be $0$ for dimensional reasons. If the kernel is the whole Lie algebra, then the Lie group homomorphism has to be trivial, as before, and hence concluding that the isotropy cannot lift to $\Spin^2(4)$, by Proposition~\ref{prop:fg}. So, the only possibility is that the kernel is $\mathfrak{so}(3)$, which again is impossible for dimensional reasons. Hence, the $\SO(5)$-invariant spin type of $S^4$ is either $3$ or $4$. We shall show now that it is $3$.

We know that $\SO(4)/\mathbb{Z}_2 \cong \SO(3) \times \SO(3)$. Hence, we have a chain of Lie group homomorphisms: 
\[ 
    \SO(4) \xrightarrow{\pi} \SO(4)/\mathbb{Z}_2 \xrightarrow{\cong} \SO(3) \times \SO(3) \xrightarrow{\pr_i} \SO(3) \, . 
\] 
At the level of fundamental groups, a generator of $\pi_1 \left( \SO(4) \right)$ goes to a generator of $\pi_1\left(\SO(3)\right)$, for a suitable choice of $i \in \{1,2\}$. Hence, the isotropy representation lifts to $\Spin^3(4)$. 

The rest of the theorem follows from standard facts of representation theory together with Proposition~\ref{prop:fg} and Theorem~\ref{cor:inv_gen_spin}. 
\end{proof}

This is an instance of a more general result: 

\begin{thmm}\label{thm:number_bounded}
Let $n \geq 3$ and let $G/H$ be an $n$-dimensional oriented Riemannian homogeneous space. Then, there is either a canonical $G$-invariant spin structure on $G/H$ or a canonical $G$-invariant spin$^n$ structure on $G/H$. In particular, its $G$-invariant spin type satisfies $1 \leq \Sigma(G/H,G) \leq n$. 
\end{thmm}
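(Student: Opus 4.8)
The plan is to exhibit on every such $G/H$ an explicit canonical $G$-invariant structure realising the bound. A $G$-invariant spin$^r$ structure is by definition a $G$-equivariant lift of the structure group of the oriented frame bundle $FM=G\times_\sigma\SO(n)$ along $\lambda_n^r$, so by Lemma \ref{lemma:lift_gen} (applied with $K=\Spin^r(n)$, $L=\SO(n)$, $\lambda=\lambda_n^r$, $\eta=\sigma$) it suffices to produce a Lie group homomorphism $\widetilde\sigma\colon H\to\Spin^r(n)$ with $\lambda_n^r\circ\widetilde\sigma=\sigma$: the associated bundle $G\times_{\widetilde\sigma}\Spin^r(n)$, with its natural reduction map to $FM$, is then the required structure, and it is canonical as soon as $\widetilde\sigma$ is built without choices. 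I would invoke Lemma \ref{lemma:lift_gen} rather than Theorem \ref{cor:inv_gen_spin} so that $H$ need not be assumed connected for the existence statement; connectedness of $H$, together with $n\ge 3$, will enter only in the uniqueness claim of Case 1 below. Thus everything reduces to lifting the isotropy representation $\sigma\colon H\to\SO(n)$ through $\lambda_n\colon\Spin(n)\to\SO(n)$ or through $\lambda_n^n\colon\Spin^n(n)\to\SO(n)$.

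The crux is that $\lambda_n^n$ always admits a canonical group-theoretic section. Writing $\Spin^n(n)=(\Spin(n)\times\Spin(n))/\mathbb{Z}_2$ with $\mathbb{Z}_2=\langle(-1,-1)\rangle$, I would set
\[
    s\colon\SO(n)\to\Spin^n(n),\qquad s(A)=[\mu,\mu]\ \text{for any }\mu\in\lambda_n^{-1}(A),
\]
which is well defined because $[\mu,\mu]=[-\mu,-\mu]$, smooth (choose $\mu$ along a local section of $\lambda_n$), and multiplicative since $[\mu_A,\mu_A][\mu_B,\mu_B]=[\mu_A\mu_B,\mu_A\mu_B]$ with $\lambda_n(\mu_A\mu_B)=AB$; one has $\lambda_n^n\circ s=\Id$ and $\xi_n^n\circ s=\Id$. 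Equivalently $\varphi^{n,n}\circ s=\Delta$, the diagonal of $\SO(n)\times\SO(n)$, and the fact that $\Delta\circ\sigma$ lifts along $\varphi^{n,n}$ can alternatively be read off Proposition \ref{prop:fg}(4), since $(\Delta\circ\sigma)_\sharp(\pi_1(H))$ then lies in the diagonal $\langle(1,1)\rangle\subseteq\mathbb{Z}_2\times\mathbb{Z}_2=\varphi^{n,n}_\sharp(\pi_1(\Spin^n(n)))$ — though that route already needs $H$ connected. Taking $\widetilde\sigma:=s\circ\sigma$ then produces a canonical $G$-invariant spin$^n$ structure on $G/H$, whose auxiliary $\SO(n)$-bundle is $FM$ itself, so that $\Sigma(G/H,G)\le n$; this already establishes the ``in particular'' clause.

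For the first assertion I would split into two cases. \textbf{Case 1:} if $\sigma$ lifts to $\Spin(n)$, then, $H$ being connected and $\Spin(n)$ simply connected for $n\ge 3$, the lift $\widetilde\sigma\colon H\to\Spin(n)$ is unique, so $G\times_{\widetilde\sigma}\Spin(n)$ is the canonical $G$-invariant spin structure and $\Sigma(G/H,G)=1$. \textbf{Case 2:} otherwise, the canonical $G$-invariant spin$^n$ structure of the previous paragraph is the one to single out. Either way $1\le\Sigma(G/H,G)\le n$, the lower bound being trivial. I do not anticipate a serious obstacle: the only slightly delicate point is verifying that $s$ is a well-defined smooth homomorphism — the sign ambiguity in the choice of $\mu$ is precisely the $\mathbb{Z}_2$ we quotiented out — after which the theorem amounts to feeding $\widetilde\sigma$ into Lemma \ref{lemma:lift_gen}.
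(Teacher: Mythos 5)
Your proof is correct, and it rests on the same underlying idea as the paper's — the diagonal of $\SO(n)\times\SO(n)$ lifts along $\varphi^{n,n}$ — but you reach it by a genuinely more explicit and slightly more general route. Where the paper argues via the lifting criterion, using Proposition~\ref{prop:fg}(4) to see that $(\sigma\times\sigma)_\sharp(\pi_1(H))$ lands in $\varphi^{n,n}_\sharp(\pi_1(\Spin^n(n)))=\langle(1,1)\rangle$, you instead exhibit a bona fide Lie group section $s\colon\SO(n)\to\Spin^n(n)$, $A\mapsto[\mu,\mu]$, of $\lambda_n^n$ (equivalently, the lift of the diagonal $\Delta$), and take $\widetilde\sigma=s\circ\sigma$. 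This buys you two things. First, the resulting spin$^n$ structure is \emph{manifestly} canonical — it is a composition of group homomorphisms built with no choices — which fits the wording of the statement better than an abstract appeal to the lifting criterion. Second, your existence argument makes no use of connectedness of $H$: the covering-space lifting criterion that the paper invokes requires a connected (and locally path-connected) domain, an assumption the theorem's statement does not announce; your construction of $s$ bypasses this entirely, and you correctly flag that connectedness of $H$ is only needed for the uniqueness claim in Case~1. The paper's own proof of Theorem~\ref{thm:spheres_so} already writes down the diagonal map, so the idea is present there, but your version packages it as a clean algebraic section rather than a homotopy-theoretic lifting argument.

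One minor remark: your case split is not strictly necessary for the ``in particular'' clause, since the canonical spin$^n$ structure $G\times_{s\circ\sigma}\Spin^n(n)$ exists unconditionally; the dichotomy serves only to privilege the (unique, when $H$ is connected) spin structure when it exists, which is exactly the spirit of the paper's formulation. You could also observe, as the paper does afterwards, that $\xi_n^n\circ s=\Id$ shows the auxiliary $\SO(n)$-bundle of your canonical spin$^n$ structure is $FM$ itself.
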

\begin{proof}
Let $\sigma \colon H \to \SO(n)$ be its isotropy representation. The induced homomorphism at the level of fundamental groups is $\sigma_{\sharp} \colon \pi_1(H) \to \pi_1 \left( \SO(n) \right) \cong \mathbb{Z}_2$. If $\sigma_{\sharp}$ is trivial, then the isotropy representation lifts to $\Spin(n)$, and hence $G/H$ has a (unique) $G$-invariant spin structure. If $\sigma_{\sharp}$ is not trivial, then consider $\sigma \times \sigma \colon H \to \SO(n) \times \SO(n)$. Then, $\left( \sigma \times \sigma \right)_{\sharp} \left( \pi_1(H)\right) \subseteq \langle(1,1)\rangle \subseteq \mathbb{Z}_2 \times \mathbb{Z}_2 \cong \pi_1 \left( \SO(n) \times \SO(n) \right)$, and $\left(\varphi^{n,n}\right)_{\sharp} \left( \pi_1 \left(\Spin^n(n)\right)\right) = \langle (1,1) \rangle$, by Proposition~\ref{prop:fg}. Hence, the isotropy representation lifts to $\Spin^n(n)$, yielding a $G$-invariant spin$^n$ structure on $G/H$. 
\end{proof}

Note that Theorem~\ref{thm:spheres_so} shows that the bound in Theorem~\ref{thm:number_bounded} is sharp (compare with Corollary~\ref{cor:improvement}). Moreover, recall that, if the isotropy $H$ is connected and $G/H$ has a $G$-invariant spin structure, then such a structure is unique~\cite{DKL}. This has the advantage of singling out a \emph{preferred} spin structure to work with. We observed that, in general, we cannot expect to have a unique $G$-invariant spin$^r$ structure. Theorem~\ref{thm:number_bounded} gives a $G$-invariant spin$^n$ structure which is built in a very natural way using only the isotropy representation. So, in general, an $n$-dimensional oriented Riemannian homogeneous $G$-space comes equipped with a \emph{natural} $G$-invariant spin$^n$ structure.  

Let us now continue with the study of homogeneous spheres. We know that the unique spin structure of $S^{2n+1}$ is not $\U(n+1)$-invariant~\cite{DKL}. So, the $\U(n+1)$-invariant spin type of $S^{2n+1}$ is not $1$. However, we have the following: 

\begin{thmm}\label{thm:spheres_u}
    The $\U(n+1)$-invariant spin type of $S^{2n+1}$ is $2$: 
    \begin{equation*}
        \Sigma(S^{2n+1},\U(n+1)) =2 \, .
    \end{equation*}
    \end{thmm}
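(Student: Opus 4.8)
The plan is as follows. Realise $S^{2n+1}=\U(n+1)/\U(n)$, where $\U(n)\subset\U(n+1)$ is the stabiliser of a point $p\in S^{2n+1}\subset\mathbb{C}^{n+1}$. The tangent space $T_pS^{2n+1}$ decomposes $\U(n)$-equivariantly as $\mathbb{R}\oplus\mathbb{C}^n$ (the line $i\mathbb{R}p$, on which $\U(n)$ acts trivially, and its orthogonal complement $\mathbb{C}^n$, carrying the standard representation), so the isotropy representation $\sigma\colon\U(n)\to\SO(2n+1)$ is the composite $\U(n)\hookrightarrow\SO(2n)\hookrightarrow\SO(2n+1)$. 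Since the unique spin structure on $S^{2n+1}$ is not $\U(n+1)$-invariant \cite{DKL}, we have $\Sigma(S^{2n+1},\U(n+1))\geq 2$, and it remains to exhibit a $\U(n+1)$-invariant spin$^{\mathbb{C}}$ structure.

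By Theorem \ref{cor:inv_gen_spin} (and recalling that $\U(n)$ is connected), it suffices to produce a Lie group homomorphism $\varphi\colon\U(n)\to\SO(2)$ such that $\sigma\times\varphi\colon\U(n)\to\SO(2n+1)\times\SO(2)$ lifts along the covering $\varphi^{2,2n+1}\colon\Spin^{\mathbb{C}}(2n+1)\to\SO(2n+1)\times\SO(2)$. I would take $\varphi=\det\colon\U(n)\to\U(1)\cong\SO(2)$. Since $\U(n)$ is connected and locally path-connected and $\varphi^{2,2n+1}$ is a covering map, the covering-space lifting criterion gives such a lift precisely when $(\sigma\times\varphi)_\sharp(\pi_1(\U(n)))\subseteq\varphi^{2,2n+1}_\sharp(\pi_1(\Spin^{\mathbb{C}}(2n+1)))$, and when it exists the lift sending $e$ to $[e,e]$ is automatically a Lie group homomorphism (as used already in Proposition \ref{prop:hol}). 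Now $\pi_1(\U(n))\cong\mathbb{Z}$ is generated by the loop $\gamma(t)=\diag(e^{2\pi it},1,\dots,1)$; one has $\varphi_\sharp(\gamma)=1\in\mathbb{Z}\cong\pi_1(\SO(2))$, while $\sigma_\sharp(\gamma)$ is the nontrivial element of $\mathbb{Z}_2\cong\pi_1(\SO(2n+1))$. Hence $(\sigma\times\varphi)_\sharp(\pi_1(\U(n)))=\langle(1,1)\rangle\subseteq\mathbb{Z}_2\times\mathbb{Z}$, which is exactly $\varphi^{2,2n+1}_\sharp(\pi_1(\Spin^{\mathbb{C}}(2n+1)))$ by Proposition \ref{prop:fg}(3) applied with $2n+1$ in the role of $n$ (valid since $2n+1\geq 3$). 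Therefore the lift exists, producing a $\U(n+1)$-invariant spin$^{\mathbb{C}}$ structure on $S^{2n+1}$, and combined with the lower bound this gives $\Sigma(S^{2n+1},\U(n+1))=2$.

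The main obstacle is the fundamental-group bookkeeping — in particular pinning down $\sigma_\sharp(\gamma)$ and matching it against the subgroup of Proposition \ref{prop:fg}(3). This is the standard fact that $\U(n)\hookrightarrow\SO(2n)\hookrightarrow\SO(2n+1)$ induces the surjection $\mathbb{Z}\to\mathbb{Z}_2$ on $\pi_1$; I would justify it by lifting $\gamma$ explicitly through $\lambda_{2n+1}\colon\Spin(2n+1)\to\SO(2n+1)$: the image of $\gamma$ is a single full rotation in a coordinate $\mathbb{R}^2$-factor, whose lift is the path $t\mapsto\cos(\pi t)+\sin(\pi t)\,e_1e_2$ ending at $-1$, so $\gamma$ is non-contractible in $\SO(2n+1)$ and hence represents the nontrivial class. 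Everything else — the decomposition of the isotropy representation and the automatic homomorphism property of the lift — is routine.
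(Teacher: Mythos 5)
Your argument is correct and follows the same route as the paper: take $\varphi = \det \colon \U(n) \to \U(1) \cong \SO(2)$, show via the covering-space lifting criterion and Proposition~\ref{prop:fg} that $\sigma \times \det$ lifts to $\Spin^{\mathbb{C}}(2n+1)$, and combine with the lower bound from \cite{DKL}. You supply more of the fundamental-group bookkeeping than the paper does (explicit generator $\gamma$, explicit Clifford-algebra lift), but the underlying idea is identical.
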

    Moreover, the $\U(n+1)$-invariant spin$^{\mathbb{C}}$ structures on $S^{2n+1}$ are the ones corresponding to 
    \begin{align*}
        \varphi_s \colon \U(n) \to \U(1) \\
        A \mapsto \det(A)^s
    \end{align*} 
    with $s$ odd. 
\begin{proof}
The isotropy representation $\sigma \colon \U(n) \to \SO(2n+1)$ is given by the natural inclusions $\U(n) \subseteq \SO(2n) \subseteq \SO(2n+1)$. Consider the map $ \sigma \times \det \colon \U(n) \to \SO(2n+1) \times \SO(2)$, given by $\sigma(A) = (A,\det(A))$. The image of a generator of the fundamental group of $\U(n)$ is an element of the form $(\alpha_{2n+1},\alpha_2)$, where $\alpha_{s}$ is a generator of the fundamental group of $\SO(s)$. Hence, by Proposition~\ref{prop:fg}, this map lifts to $\Spin^{2}(2n+1)$, yielding a $\U(n+1)$-invariant spin$^2$ structure on $S^{2n+1}$.
 
For the final assertion of the theorem, note that the only Lie group homomorphisms $\U(n) \to \SO(2) \cong \U(1)$ are $\varphi_s$, for $s \in \mathbb{Z}$. Moreover,  y Proposition~\ref{prop:fg}, $\sigma \times \varphi_s$ lifts to $\Spin^{\mathbb{C}}(2n+1)$ if and only if $s$ is odd. And, as $\U(1)$ is abelian, the spin$^{\mathbb{C}}$ structures defined by these are pairwise non-$\U(n+1)$-equivariantly equivalent. 
\end{proof}

We know that the $\Sp(n+1)\cdot \U(1)$-invariant spin type of $S^{4n+3}$ is $1$ for $n$ odd, and that it is not $1$ for $n$ even~\cite{DKL}. 

\begin{thmm}\label{thm:spheres_spu}
The $\Sp(2n+1) \cdot \U(1)$-invariant spin type of $S^{8n+3}$ is $2$. Hence, 
\begin{equation*}
    \Sigma(S^{4n+3},\Sp(n+1) \cdot \U(1)) = 
    \begin{dcases} 
    1, & n \text{  odd  }\\
    2 & n \text{  even  } \, .
 \end{dcases} 
\end{equation*}
Moreover, the $\Sp(2n+1) \cdot \U(1)$-invariant spin$^{\mathbb{C}}$ structures on $S^{8n+3}$ are the ones corresponding to 
\begin{align*}
    \varphi_s \colon \Sp(2n+1) \cdot \U(1) \to \U(1) \\
    [A,z] \mapsto z^s
\end{align*} 
with $s \equiv 2 \mod 4 $. 
\end{thmm}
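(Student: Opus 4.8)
The plan is to feed the homogeneous presentation $S^{8n+3}=G/H$, with $G=\Sp(2n+1)\cdot\U(1)$ acting on $\mathbb{H}^{2n+1}\supseteq S^{8n+3}$ by $[A,z]\cdot x = Axz^{-1}$ and isotropy the connected group $H = \Sp(2n)\cdot\U(1) = (\Sp(2n)\times\U(1))/\mathbb{Z}_2$, into Theorem~\ref{cor:inv_gen_spin}. At the base point $o=(0,\dots,0,1)$ one has $T_oS^{8n+3} = \mathbb{H}^{2n}\oplus\mathrm{Im}(\mathbb{H})$, and the isotropy representation splits as $\sigma = \rho\oplus\mathrm{Ad}$, where $\rho\colon H\to\SO(8n)$ is $[B,z]\cdot v = Bvz^{-1}$ on $\mathbb{H}^{2n}$ and $\mathrm{Ad}\colon H\to\SO(3)$ is $[B,z]\cdot w = zwz^{-1}$ on $\mathrm{Im}(\mathbb{H})$. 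Since $\SO(2)$ is abelian, Theorem~\ref{cor:inv_gen_spin} says the $G$-invariant spin$^{\mathbb{C}}$ structures up to equivalence are in bijection with the homomorphisms $\varphi\colon H\to\SO(2)\cong\U(1)$ for which $\sigma\times\varphi$ lifts to $\Spin^{\mathbb{C}}(8n+3)$; so the job reduces to (i) listing such $\varphi$, and (ii) testing the lift.

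For (i) I would pull back along the double cover $\Sp(2n)\times\U(1)\to H$: a homomorphism $H\to\U(1)$ is the same as a homomorphism $\Sp(2n)\times\U(1)\to\U(1)$ killing $(-I,-1)$; since $\Sp(2n)$ is semisimple, hence perfect, it maps trivially into $\U(1)$, so the homomorphism is $(B,z)\mapsto z^s$ and descends to $H$ exactly when $s$ is even. Thus the homomorphisms $H\to\U(1)$ are precisely the $\varphi_s\colon[B,z]\mapsto z^s$ with $s\in 2\mathbb{Z}$. For (ii), since $H$ is connected and $\varphi^{2,8n+3}\colon\Spin^{\mathbb{C}}(8n+3)\to\SO(8n+3)\times\SO(2)$ is a covering, the lifting criterion together with Proposition~\ref{prop:fg}(3) says $\sigma\times\varphi_s$ lifts iff $(\sigma\times\varphi_s)_\sharp(\pi_1(H))\subseteq\langle(1,1)\rangle\subseteq\mathbb{Z}_2\times\mathbb{Z}\cong\pi_1(\SO(8n+3)\times\SO(2))$, where $\langle(1,1)\rangle = \{(a,b): a\equiv b\bmod 2\}$. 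Since $\Sp(2n)$ is simply connected, the cover $\Sp(2n)\times\mathbb{R}\to H$, $(B,x)\mapsto[B,e^{i\pi x}]$, shows $\pi_1(H)\cong\mathbb{Z}$, generated by the loop $\gamma(t) = [c(t),e^{i\pi t}]$ for any path $c\colon[0,1]\to\Sp(2n)$ from $I$ to $-I$.

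It then remains to evaluate $\gamma$ on the three factors. The value $(\varphi_s)_\sharp(\gamma)=s/2\in\mathbb{Z}=\pi_1(\SO(2))$ is immediate from the winding of $t\mapsto e^{i\pi s t}$. On the $\SO(3)$-factor, $\mathrm{Ad}(\gamma(t))$ is rotation by $2\pi t$ about the $i$-axis, so $\mathrm{Ad}_\sharp(\gamma)$ is the generator $1\in\mathbb{Z}_2\cong\pi_1(\SO(3))$. On the $\SO(8n)$-factor I would view $\mathbb{H}^{2n}$ as $\mathbb{C}^{4n}$ (right $\mathbb{C}$-module structure), so that $\rho(\gamma(t))$ is left multiplication by $c(t)\in\Sp(2n)\subseteq\SU(4n)$ composed with scalar multiplication by $e^{-i\pi t}$; its complex determinant is $e^{-4\pi i n t}$, of winding number $-2n$, which is even, so $\rho_\sharp(\gamma)=0\in\mathbb{Z}_2\cong\pi_1(\SO(8n))$ (using that $\pi_1(\U(4n))\to\pi_1(\SO(8n))$ is reduction mod $2$). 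As the map $\pi_1(\SO(8n)\times\SO(3))\to\pi_1(\SO(8n+3))$ induced by block inclusion is addition, $\sigma_\sharp(\gamma)=0+1=1$, so $(\sigma\times\varphi_s)_\sharp(\gamma)=(1,s/2)$, and this lies in $\langle(1,1)\rangle$ iff $s/2$ is odd, i.e. iff $s\equiv 2\pmod 4$.

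Finally I would assemble the conclusion: by (i) and (ii) the admissible homomorphisms are exactly the $\varphi_s$ with $s\equiv 2\pmod 4$, and since conjugation by $\SO(2)$ is trivial, Theorem~\ref{cor:inv_gen_spin} turns these into pairwise $G$-inequivalent $G$-invariant spin$^{\mathbb{C}}$ structures, which is the ``moreover'' part. Taking $s=2$ produces such a structure, so $\Sigma(S^{8n+3},G)\leq 2$; and $\sigma_\sharp(\gamma)=1\neq 0$ means $\sigma$ does not lift to $\Spin(8n+3)$, so no $G$-invariant spin structure exists and $\Sigma(S^{8n+3},G)\geq 2$ (this non-existence is also recorded in \cite{DKL}); hence $\Sigma(S^{8n+3},G)=2$, which combined with the case $n$ odd from \cite{DKL} yields the stated formula for $\Sigma(S^{4n+3},\Sp(n+1)\cdot\U(1))$. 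The only real obstacle is the fundamental-group bookkeeping in step (ii) — above all, correctly placing the loop $\rho\circ\gamma$ in $\pi_1(\SO(8n))$ by routing it through $\U(4n)$ and using $\Sp(2n)\subseteq\SU(4n)$; everything else is formal once Proposition~\ref{prop:fg} and Theorem~\ref{cor:inv_gen_spin} are in hand.
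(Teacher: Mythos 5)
Your proof is correct and takes essentially the same route as the paper: classify the homomorphisms $H=\Sp(2n)\cdot\U(1)\to\U(1)$, then apply the lifting criterion of Proposition~\ref{prop:fg} together with Theorem~\ref{cor:inv_gen_spin} by tracking a generator of $\pi_1(H)\cong\mathbb{Z}$ through $\sigma\times\varphi_s$. The paper compresses the fundamental-group computation into a single sentence (deferring the isotropy description to an earlier reference), whereas your explicit splitting $\sigma=\rho\oplus\Ad$ and the determinant count through $\U(4n)\subset\SO(8n)$ spell out precisely the bookkeeping the paper is silently relying on.
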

\begin{proof}
Take the description of the isotropy representation $\sigma \colon \Sp(2n) \cdot \U(1) \to \SO(8n+3)$ given in~\cite{DKL}. Then, the Lie group homomorphism $ \Sp(2n) \cdot \U(1) \to \U(1)$ given by $[A,z] \mapsto z^2$ is well-defined, and, by looking at generators of the corresponding fundamental groups and applying Proposition~\ref{prop:fg}, one concludes. 

For the final assertion of the theorem, note that the only Lie group homomorphisms $\Sp(2n+1) \cdot \U(1) \to \SO(2) \cong \U(1)$ are $\varphi_s$, for $s \in \mathbb{Z}$ even. And $\sigma \times \varphi_s$ lifts to $\Spin^{\mathbb{C}}(8n+3)$ if and only if $s/2$ is odd, by Proposition~\ref{prop:fg}. 
\end{proof}

We also know that the $\Sp(n+1)\cdot \Sp(1)$-invariant spin type of $S^{4n+3}$ is $1$ for $n$ odd, and that it is not $1$ for $n$ even~\cite{DKL}. Now, we obtain: 

\begin{thmm}\label{thm:spheres_spsp}
    The $\Sp(2n+1) \cdot \Sp(1)$-invariant spin type of $S^{8n+3}$ is $3$. Hence, 
    \begin{equation*}
        \Sigma(S^{4n+3},\Sp(n+1) \cdot \Sp(1)) = 
        \begin{dcases} 
        1, & n \text{  odd  }\\
        3 & n \text{  even  } \, .
     \end{dcases} 
    \end{equation*}
    Moreover, there exists a unique $\Sp(2n+1) \cdot \Sp(1)$-invariant spin$^{\mathbb{H}}$ structure on $S^{8n+3}$ up to equivariant equivalence, namely the one corresponding to
\begin{align*}
    \varphi \colon \Sp(2n+1) \cdot \Sp(1) \to \SO(3) \\
    [A,q] \mapsto \lambda_3(q)
\end{align*} 
where $\lambda_3 \colon \Spin(3) \cong \Sp(1) \to \SO(3)$ is the standard double covering. 
    \end{thmm}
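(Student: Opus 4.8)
The plan is to combine the classification of $G$-invariant spin$^r$ structures from Theorem~\ref{cor:inv_gen_spin} with the fundamental-group computations of Proposition~\ref{prop:fg}. Write $G=\Sp(2n+1)\cdot\Sp(1)$ and $H=\Sp(2n)\cdot\Sp(1)$, so that $S^{8n+3}=G/H$, and let $\sigma\colon H\to\SO(8n+3)$ be the isotropy representation, described explicitly in \cite{DKL}. Two facts will be used repeatedly: first, $H$ is connected with $\pi_1(H)\cong\mathbb{Z}_2$, a generator being the loop obtained by projecting to $H$ any path from $(I,1)$ to $(-I,-1)$ in $\Sp(2n)\times\Sp(1)$; second, since $S^{8n+3}$ admits no $G$-invariant spin structure \cite{DKL}, the map $\sigma$ does not lift along $\lambda_{8n+3}$, so $\sigma_\sharp\colon\pi_1(H)\to\pi_1(\SO(8n+3))\cong\mathbb{Z}_2$ is an isomorphism. (The case $n$ odd of the displayed formula is precisely the result of \cite{DKL} recalled above, so only $n$ even needs proof.) For the lower bound $\Sigma(S^{8n+3},G)\geq 3$ I would rule out an invariant spin$^{\mathbb{C}}$ structure: as $H$ is a quotient of the perfect group $\Sp(2n)\times\Sp(1)$, the only homomorphism $H\to\SO(2)$ is trivial, so by Theorem~\ref{cor:inv_gen_spin} such a structure would force $\sigma\times 1\colon H\to\SO(8n+3)\times\SO(2)$ to lift along $\varphi^{2,8n+3}$; but $(\sigma\times 1)_\sharp(\pi_1(H))=\langle(1,0)\rangle\not\subseteq\langle(1,1)\rangle=\varphi^{2,8n+3}_\sharp(\pi_1(\Spin^2(8n+3)))$ by Proposition~\ref{prop:fg}(3), a contradiction.

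Next I would classify all Lie group homomorphisms $\varphi\colon H\to\SO(3)$ up to conjugation in $\SO(3)$; the claim is that there are exactly two, the trivial one and $\varphi_0\colon[A,q]\mapsto\lambda_3(q)$ (well defined because $\ker\lambda_3=\{\pm1\}$). At the Lie algebra level $d\varphi\colon\mathfrak{sp}(2n)\oplus\mathfrak{sp}(1)\to\mathfrak{so}(3)$ has kernel an ideal, hence one of $0$, $\mathfrak{sp}(2n)$, $\mathfrak{sp}(1)$, $\mathfrak{sp}(2n)\oplus\mathfrak{sp}(1)$; comparing dimensions with the three-dimensional target leaves only $\mathfrak{sp}(2n)\oplus\mathfrak{sp}(1)$ (whence $\varphi$ is trivial, $H$ being connected) and $\mathfrak{sp}(2n)$ (whence $d\varphi$ restricts to an isomorphism $\mathfrak{sp}(1)\xrightarrow{\sim}\mathfrak{so}(3)$). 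In the latter case $\varphi$ is trivial on the connected subgroup $\Sp(2n)$, so pulling back along $\Sp(2n)\times\Sp(1)\to H$ it factors through the second factor as a homomorphism $\psi\colon\Sp(1)\to\SO(3)$ with $d\psi$ an isomorphism; hence $\psi$ is a covering, so $\psi=\alpha\circ\lambda_3$ for an automorphism $\alpha$ of $\SO(3)$, and $\alpha$ is inner since $\mathrm{Out}(\SO(3))$ is trivial. Therefore $\varphi=\mathrm{conj}_g\circ\varphi_0$ for some $g\in\SO(3)$, proving the claim.

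It remains to decide which of the two candidates lifts. For $\varphi_0$, the generator of $\pi_1(H)$ maps to $\lambda_3$ of a path from $1$ to $-1$ in $\Sp(1)=\Spin(3)$, hence to the nontrivial element of $\pi_1(\SO(3))\cong\mathbb{Z}_2$; thus $(\sigma\times\varphi_0)_\sharp(\pi_1(H))=\langle(1,1)\rangle$, which by Proposition~\ref{prop:fg}(4) equals $\varphi^{3,8n+3}_\sharp(\pi_1(\Spin^3(8n+3)))$, so $\sigma\times\varphi_0$ lifts along $\varphi^{3,8n+3}$ and yields a $G$-invariant spin$^{\mathbb{H}}$ structure; combined with the lower bound, $\Sigma(S^{8n+3},G)=3$. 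For the trivial $\varphi$, $(\sigma\times 1)_\sharp(\pi_1(H))=\langle(1,0)\rangle\not\subseteq\langle(1,1)\rangle$, so it does not lift. By Theorem~\ref{cor:inv_gen_spin}, the spin$^{\mathbb{H}}$ structure attached to $\varphi_0$ is therefore the only $G$-invariant one up to $G$-equivariant equivalence. I expect the classification of homomorphisms $H\to\SO(3)$ to be the main obstacle; the remaining steps are routine bookkeeping with the lifting criterion and Proposition~\ref{prop:fg}, and one could in principle bypass the classification by tracking generators through the explicit isotropy representation of \cite{DKL}, as in the preceding theorems, at the cost of the clean uniqueness statement.
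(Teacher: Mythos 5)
Your proposal is correct and follows essentially the same route as the paper: use Theorem~\ref{cor:inv_gen_spin} together with Proposition~\ref{prop:fg}, produce the invariant spin$^{\mathbb H}$ structure from $[A,q]\mapsto\lambda_3(q)$, rule out spin$^{\mathbb C}$ by observing that $H$ admits no nontrivial homomorphism to the abelian group $\SO(2)$, and obtain uniqueness from the classification of homomorphisms $H\to\SO(3)$ (equivalently, of $3$-dimensional representations of $\Sp(1)$). Your write-up simply fills in details the paper leaves implicit (the $\pi_1(H)\cong\mathbb Z_2$ computation, the Lie-algebra ideal argument, and the explicit verification that $\sigma\times\varphi_0$ lands in $\langle(1,1)\rangle$), but the underlying ideas and structure match the paper's proof.
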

    \begin{proof}
    Take the description of the isotropy representation $\sigma \colon \Sp(2n) \cdot \Sp(1) \to \SO(8n+3)$ in~\cite{DKL}. Then, the Lie group homomorphism $ \Sp(2n) \cdot \Sp(1) \cong \Sp(2n) \cdot \Spin(3) \to \SO(3)$ given by $[A,z] \mapsto \lambda_3(z)$ is well-defined, and by looking at generators of the corresponding fundamental groups one concludes that the $\Sp(2n+1) \cdot \Sp(1)$-invariant spin type of $S^{8n+3}$ is either $2$ or $3$. But it cannot be $2$, because then we would have a non-trivial Lie group homomorphism $\SO(3) \to \SO(2)$, which is impossible, as in the proof of Proposition~\ref{prop:fg}.  

    The last assertion of the theorem follows from the fact that there is only one non-trivial $3$-dimensional representation of $\Sp(1)$ up to conjugation by elements of $\SO(3)$, together with Proposition~\ref{prop:fg} and Theorem~\ref{cor:inv_gen_spin}. 
    \end{proof}

We summarise our results in Table~\ref{table:invariant_spin_type}, which shows the $G$-invariant spin type of the spheres for all connected Lie groups $G$ acting transitively and effectively on them, and thus completing the study of invariant generalised spin$^r$ structures on homogeneous spheres.  

\begin{table}[h!]
    \centering
    \renewcommand{\arraystretch}{.8}
    \begin{tabular}{ c c c }
        \toprule
        Space $M$ & Group $G$ & $\Sigma(M,G)$ \\
        \midrule
        \multirow{2}*{$S^{n}$} & \multirow{2}*{$\SO(n+1)$} & $n$, for $n \neq 4$ \\ 
        & & $3$, for $n=4$ \\
        \midrule
        $S^{2n+1}$ & $\U(n+1)$ & $2$ \\ 
        \midrule
        $S^{2n+1}$ & $\SU(n+1)$ & $1$ \\ 
        \midrule
        $S^{4n+3}$ & $\Sp(n+1)$ & $1$ \\ 
        \midrule
        \multirow{2}*{$S^{4n+3}$} & \multirow{2}*{$\Sp(n+1) \cdot \U(1)$} & $1$, for $n$ odd \\ 
        & & $2$ , for $n$ even \\
        \midrule
        \multirow{2}*{$S^{4n+3}$} & \multirow{2}*{$\Sp(n+1) \cdot \Sp(1)$} & $1$, for $n$ odd \\ 
        & & $3$ , for $n$ even \\
        \midrule
        $S^{6}$ & $G_2$ & $1$ \\ 
        \midrule
        $S^{7}$ & $\Spin(7)$ & $1$ \\ 
        \midrule
        $S^{15}$ & $\Spin(9)$ & $1$ \\ 
        \bottomrule
    \end{tabular}
    \vspace{.2cm} 
    \caption{$G$-invariant spin type of homogeneous spheres.}
    \label{table:invariant_spin_type}
\end{table}  

Table~\ref{table:invariant_spin_type} has an interesting application, in analogy with~\cite[Prop.~1.6]{DKL}: 

\begin{thmm}\label{prop:general} 
    Let $G$ be the holonomy group of a simply connected irreducible non-symmetric Riemannian manifold of dimension $n+1 \geq 4$. Let $H \leq G$ be a subgroup such that $S^n \cong G/H$, which exists, by Berger's classification. Then, the following are equivalent: 
    \begin{enumerate} 
        \item There exists a homomorphic lift of the holonomy representation to $\Spin^{r}(n+1)$. 
        \item $S^n$ has a $G$-invariant spin$^r$ structure with strongly $G$-trivial auxiliary bundle. 
    \end{enumerate}
\end{thmm}

\begin{proof}
    Let $h \colon G \to \SO(n+1)$ be the holonomy representation, and let $\sigma \colon H \to \SO(n)$ be the isotropy representation of the corresponding sphere. We have a commutative diagram 
    \[ 
        \begin{tikzcd}
            H \arrow[d,"\sigma"] \arrow[r,"\iota"] & G \arrow[d,"h"] \\
            \SO(n) \arrow[r] & \SO(n+1) \, . 
        \end{tikzcd}
    \] 
    First, suppose that there exists a lift $\widetilde{h} \colon G \to \Spin^r(n+1)$ of $h$. Define $\psi \colon G \to \SO(r)$ by $\psi = \pr_2 \circ \varphi^{r,n} \circ \widetilde{h}$, and consider $\sigma \times \left(\psi \circ \iota \right) \colon H \to \SO(n) \times \SO(r)$. Then, we get the long exact sequences of homotopy groups 
        \[
\begin{tikzcd}[column sep=1em]
    \pi_2(G/H)=0 
        \arrow[r] 
    & \pi_1(H) 
        \arrow[d,"\sigma_{\sharp} \times \left( \psi \circ \iota \right)_{\sharp}"{right}] 
        \arrow[r,"\iota_{\sharp}"] 
    & \pi_1(G) 
        \arrow[r] 
        \arrow[d,"h_{\sharp} \times \psi_{\sharp}"{right}] 
    & \pi_1(G/H)=0 
    \\
    \pi_2(S^n)=0 
        \arrow[r] 
    & \pi_1\bigl(\SO(n)\times\SO(r)\bigr) 
        \arrow[r] 
    & \pi_1\bigl(\SO(n+1)\times\SO(r)\bigr) 
        \arrow[r] 
    & \pi_1(S^n)=0 \,.
\end{tikzcd}
\]
    This implies we have a commuting square where the horizontal maps are isomorphisms:
    \begin{equation}\label{eq:homotopy}
        \begin{tikzcd}
            \pi_1(H) \arrow[d,"\sigma_{\sharp} \times \left( \psi \circ \iota \right)_{\sharp}"] \arrow[r, "\cong"] & \pi_1(G) \arrow[d,"h_{\sharp} \times \psi_{\sharp}"] \\
            \pi_1(\SO(n) \times \SO(r)) \arrow[r, "\cong"] & \pi_1(\SO(n+1) \times \SO(r)) \, . 
        \end{tikzcd}
    \end{equation}
    To conclude, we need to show that the image of $\pi_1 \left( \Spin^r(n) \right)$ in $\pi_1(\SO(n) \times \SO(r))$ maps to the image of $\pi_1 \left(\Spin^r(n+1) \right)$ in $\pi_1(\SO(n+1) \times \SO(r))$ by the isomorphism of the bottom row. This follows by applying direct image to the commuting square:
    \[
        \begin{tikzcd}
            \Spin^r(n) \arrow[r] \arrow[d] & \Spin^r(n+1) \arrow[d] \\
            \SO(n) \times \SO(r) \arrow[r] & \SO(n+1) \times \SO(r) \, . 
          \end{tikzcd}
    \]
    This guarantees the existence of a lift of $\sigma \times (\psi \circ \iota)$ to $\Spin^r(n)$. By part (3) of Lemma~\ref{lemma:lift}, the auxiliary bundle of the corresponding spin$^r$ structure is strongly $G$-trivial. 

    Conversely, suppose that the sphere $G/H$ has a $G$-invariant spin$^r$ structure. This means that there exists a homomorphism $\varphi \colon H \to \SO(r)$ such that $\sigma \times \varphi$ lifts to $\Spin^r(n)$. If, moreover, the auxiliary bundle is strongly $G$-trivial, there exists a Lie group homomorphism $\psi$ extending $\varphi$ to $G$. Now, the same reasoning as above using diagram~\eqref{eq:homotopy} concludes the proof. 
\end{proof} 

\begin{rem}
    If we only require the auxiliary bundle to be trivial in point (2) of Theorem~\ref{prop:general} and not necessarily strongly $G$-trivial, the holonomy representation still lifts for topological reasons, but the lift need not be a homomorphism. 
\end{rem}

This result places the study of invariant spin$^r$ structures on spheres in a much wider context. For example, as a consequence of Theorem~\ref{prop:general} and Table~\ref{table:invariant_spin_type}, the holonomy representation of a simply connected irreducible non-symmetric $(8k+4)$-dimensional quaternionic K{\"a}hler manifold (or more generally any $(8k+4)$-dimensional manifold with holonomy containing $\Sp(2k+1)\cdot\Sp(1)$) does not lift to $\Spin^{\mathbb{C}}(8k+4)$.  
 
\FloatBarrier
\section*{Acknowledgements}
D. Artacho is funded by the UK Engineering and Physical Sciences Research Council (EPSRC), grant EP/W5238721. 


\bibliographystyle{alphaurl}
\bibliography{references.bib}
\end{document}